\providecommand{\U}[1]{\protect\rule{.1in}{.1in}}
\newtheorem{theorem}{Theorem}
\newtheorem{definition}[theorem]{Definition}
\newtheorem{lemma}[theorem]{Lemma}
\newtheorem{remark}[theorem]{Remark}
\newenvironment{proof}[1][Proof]{\noindent\textbf{#1.} }{\ \rule{0.5em}{0.5em}}
\begin{document}

\title{Mean field limit of interacting filaments for 3D Euler equations}

\author{Hakima Bessaih\thanks{University of Wyoming, Department of Mathematics, Dept.
3036, 1000 East University Avenue, Laramie WY 82071, United States,
bessaih@uwyo.edu}, Michele Coghi\thanks{ University of Bielefeld, Universit\"atsstra\ss e 25, 33615 Bielefeld, Germany, michele.coghi@gmail.com}, Franco
Flandoli\thanks{ Dipartimento di Matematica, Universit\`{a} di Pisa, Largo
Bruno Pontecorvo 5, 56127, Pisa, Italy, flandoli@dma.unipi.it}}
\date{}
\maketitle

\begin{abstract}
	The 3D Euler equations, precisely local smooth solutions of class $H^s$ with $s>5/2$, are obtained as a mean field limit of finite families of interacting curves, the so called vortex filaments, described by means of the concept of $1$-currents. This work is a continuation of \cite{BCF-2017}, where a preliminary result in this direction was obtained, with the true Euler equations replaced by a vector valued non linear PDE with a mollified Biot-Savart relation.	
\end{abstract}

\textbf{Keyword}: 3D Euler equations, vortex filaments, currents, mean field theory. \newline\newline\textbf{MSC[2010]}:
Primary: 35Q31, 70F45, Secondary: 37C10, 76B47, 49Q15.

\section{Introduction}

In our previous paper \cite{BCF-2017},  we investigated a mean-field type convergence
of a family of vortex lines (called vortex filaments) to a smoothed version of
the 3D Euler equations, where also the interaction between the vortex lines was
suitably mollified. This present paper is a continuation of \cite{BCF-2017} with the purpose of
proving convergence of the mollified vortex line dynamics to the
\textit{true} 3D Euler equations. The result is local in time, since we deal
with relatively smooth solutions of 3D Euler equations in their vorticity formulation, which exist and are
unique in Sobolev spaces $H^{s}$ for $s>\frac{3}{2}$. The passage from the
mollified to the true Euler equations was missing in our previous work since
we were unable to overcome a difficulty concerned with the local time interval
where the convergence should take place. It is quite clear that mollified and
true Euler equations have unique $H^{s}$ local solutions on a common local
time interval, as proved below in Theorem \ref{sobolev solutions euler}. However, it is not clear a
priori that the mollified vortex dynamics has a local solution on an interval
which is independent of the mollification. Under special assumptions on the
mollification kernel, however, one can prove globality of the vortex
dynamics:\ this is the key property used below to close the approximation
result. Let us also mention the much more difficult open problem of dealing
with the true vortex dynamics instead of the mollified one;\ however, this
problem is unsolved even for a single vortex filament, so the question of
convergence of a family of interacting filaments to Euler equations is
premature. 

The idea of a mean field of vortex filaments has been already
investigated with some success in  \cite{LionsMajda, Lions}. However, in those works the equation limit  is not
Euler equations, due to some idealizations in the vortex model. The filaments considered in these papers were assumed to be parallel. The present work is the first one that connects vortex filament models to the true Euler equations.

Our motivation comes from studying the Euler equations in $\mathbb{R}^3$

\begin{equation}\label{euler}
\begin{cases}
\partial_t v+(v \cdot \nabla) v +\nabla p =0 \\
\nabla \cdot v=0\\
v(0,x)=v_{0}(x)
\end{cases}
\end{equation}
where $v=v(t,x)$ denotes the velocity vector field, $x\in \mathbb{R}^3$, $p=p(t,x)$  the scalar pressure. If we denote by $\xi$ the vorticity field, $\xi=\nabla\times v$ then $\xi$ satisfies 

\begin{equation}\label{vorticity}
\begin{cases}
\partial_t \xi+(v \cdot \nabla) \xi = (\xi \cdot \nabla) v \\
\nabla \cdot \xi=0,\\
\xi(0,x)=\xi_{0}(x).
\end{cases}
\end{equation}
The vorticity and velocity fields $\xi$, $v$ are related by the Biot-Savart formula
\begin{equation*}
v(x)=(K\ast\xi)(x),
\end{equation*}
where  $K$ is the singular matrix  (at 0) given by
\begin{equation*}
K(x)=\frac{-\Gamma}{4\pi|x|^{3}}\left(
                         \begin{array}{ccc}
                           0 & x_{3} & -x_{2} \\
                           -x_{3} & 0& x_{1} \\
                           x_{2} & -x_{1} & 0 \\
                         \end{array}
                       \right).
\end{equation*}
and $\Gamma$ is the circulation associated to the velocity field $v$.
The Kernel $K$ can also be rewritten as follows:  for $x, h\in \mathbb{R}^3$
\begin{equation*}
K(x)h=\frac{\Gamma}{4\pi} \frac{x} {|x|^{3}}\times h.
\end{equation*}
Using the Biot-Savart formula, the vorticity equation \eqref{vorticity} becomes  
 \begin{equation}\label{Kvorticity}
\begin{cases}
\partial_t \xi+[(K\ast \xi) \cdot \nabla] \xi = (\xi \cdot \nabla) (K\ast\xi) \\
\nabla \cdot \xi=0,\\
\xi(0,x)=\xi_{0}(x)
\end{cases}
\end{equation}

As already mentioned, our aim is to prove a mean field result for equation \eqref{Kvorticity}, in a sense analogous to \cite{Dob} and \cite{MarPulv}, when interacting point particles are replaced by interacting curves; see the discussion about motivations in \cite{BCF-2017}.  We consider curves which vary in time and which are parametrized by $\sigma\in\left[  0,1\right]  $. Given
$t\geq0$, we consider the family of $N$ curves%
\[
\left\{  \gamma_{t}^{i,N,\delta}\left(  \sigma\right)  ;\sigma\in\left[  0,1\right]
,i=1,...,N\right\}
\]
in $\mathbb{R}^{d}$. For shortness we shall often write
$\left\{  \gamma_{t}^{i,N,\delta}\right\}$. 

These curves vary in time and interact through the equations
\begin{equation}
\frac{\partial}{\partial t}\gamma_{t}^{i,N,\delta}\left(  \sigma\right)  =\sum
_{j=1}^{N}\alpha_{j}^{N}\int_{0}^{1}K^\delta\left(  \gamma_{t}^{i,N,\delta}\left(
\sigma\right)  -\gamma_{t}^{j,N,\delta}\left(  \sigma^{\prime}\right)  \right)
\frac{\partial}{\partial\sigma^{\prime}}\gamma_{t}^{j,N,\delta}\left(  \sigma
^{\prime}\right)  d\sigma^{\prime} \label{interacting curves}%
\end{equation}
where $\alpha_{j}^{N}$ behaves as $\frac{1}{N}$ when the number of curves grows and  $K^\delta:\mathbb{R}^{3}\rightarrow \mathbb{R}^{3\times 3}$ is a smooth matrix-valued function (precisely, we need
$K^\delta$ of class $\mathcal{U}C_{b}^{3}(\mathbb{R}^{3},\mathbb{R}^{3\times 3})$, 
see \cite{BCF-2017}) which is defined as a mollification of the Biot-Savart kernel $K$. Notice that the curves $\gamma_t^{i, N,\delta}$ will also depend on $\delta$. Indeed, without the mollification we are unable to prove well-posedness of system \eqref{interacting curves}. 
The final limit to Euler equations is not uniform in $N$ and $\delta$: we have to choose $\delta$ depending on $N$, see \eqref{initial condition convergence}.

Let us briefly comment on previous investigations about vortex filaments dynamics. In our previous paper \cite{BCF-2017}, as a byproduct of our mean field result,  we were able to prove the well posedness of solutions of \eqref{interacting curves} which is a system of $N$ interacting  filaments. We proved that there exists unique global solutions in the space of smooth curves. Previous results  on the dynamic of one filament has been studied with some success in \cite{BerBes, BesGubRus, BesWij} where local in time solutions  were shown in some Sobolev spaces. Global solutions for the dynamic of one filament can be found in \cite{BerGub, BrzGubNek}. In all these papers, the study was focused on one filament  and sometimes with some stochastic features.   Numerical results can be found in \cite{Chorin, Leonard}.  A statistical approach was also developed using some Gibbs measures. This approach has been used with some success in \cite{LionsMajda} for $N$ filaments that are nearly parallel. For stochastic Brownian filaments, a similar approach was used in \cite{BesFla-2004}. The Gibbs ensemble for these stochastic Brownian filaments were first described by \cite{Flandoli} and \cite{FlaGub-2002}. 

As explained in \cite{BCF-2017}, we associate to such a family of curves a
distributional vector field (called a 1-current) defined as

\begin{equation}
\xi_{t}^{N,\delta}=\sum_{j=1}^{N}\alpha_{j}^{N}\int_{0}^{1}\delta_{\gamma_{t}%
^{j,N,\delta}\left(  \sigma\right)  }\frac{\partial}{\partial\sigma}\gamma_{t}%
^{j,N,\delta}\left(  \sigma\right)  d\sigma. \label{xi-N0}%
\end{equation} 
which plays the same role as the empirical measure in classical mean-field theory. This current also depends on $\delta$.

In \cite{BCF-2017} it was proved that $\xi_{t}^{N,\delta}$ converges in appropriate topologies to $\xi_t^\delta$, the
unique solution of a vector valued nonlinear PDE, of the form
\begin{equation}
\frac{\partial\xi_{t}^\delta}{\partial t}+\left[  \left(  K^\delta \ast\xi^\delta_{t}\right)
\cdot\nabla\right]  \xi^\delta_{t}=\xi^\delta_{t}\cdot\nabla\left(  K^\delta \ast\xi^\delta_{t}\right).
\label{PDE for currents}
\end{equation}
 We will use the result on the regularized equation to establish a mean field result for the 3D Euler equations in their vorticity formulation \eqref{Kvorticity}. 
 
 The main result of this paper is Theorem \ref{main_theorem}, which states that, given $\xi$ solution to \eqref{Kvorticity}, there exists a sequence of empirical measures $\xi_t^{N,\delta}$ of the form \eqref{xi-N0} which converges toward $\xi$. We base the proof of this theorem on the same ideas as the classical paper of Marchioro and Pulvirenti, \cite{MarPulv}. The convergence will be split in two parts. First, we proved in \cite{BCF-2017} for a fixed $\delta$ the convergence
 \begin{equation}\label{N convergence}
	 \xi^{N, \delta} \to \xi^{\delta}, \quad \mbox{as } N\to +\infty
 \end{equation}
 where $\xi^{N,\delta}$ is defined in \eqref{xi-N0} and $\xi^\delta$ is a solution to \eqref{PDE for currents}. 
 
 In what follows, we expand on that result, showing that there exists a {\it common time-interval}, independent of $\delta$, where there exist solutions of 
\eqref{Kvorticity} and \eqref{PDE for currents} and, where the convergence \eqref{N convergence} takes place.  The fundamental result to find this common interval is that \eqref{interacting curves} has a global in time solution for every $\delta$.
 The second crucial ingredient for the proof of Theorem \ref{main_theorem} is the convergence  \begin{equation}\label{delta convergence}
  \xi^\delta \to \xi, \quad \mbox{as } \delta \to 0.
 \end{equation}

 Hence, the convergence $\xi^{N, \delta} \to \xi$ can be split in \eqref{N convergence} and \eqref{delta convergence}. However, the convergence in \eqref{N convergence} is not uniform in $\delta$. We keep this into account in the proof of Theorem \ref{main_theorem} and we show that one can choose the regularization $\delta$ depending on $N$ such that the final convergence holds true.
 
 In Section \ref{section preliminaries} we briefly introduce the mathematical objects that we need in the remainder of the paper. 
In Section  \ref{section lagrangian current dynamics}, the main results of our previous paper \cite{BCF-2017} are recalled. Moreover, a collection of refined estimates are given in Lemma \ref{flow}. These estimates are necessary for the following section on mean field results for the {\it true} Euler equations \eqref{vorticity}.   We begin Section \ref{section convergence regularization} by recalling some known results on local classical solutions for the three dimensional Euler equations.  We then proceed in proving the stability of the Euler equations under the regularization of the Biot-Savart Kernel $K$, namely \eqref{delta convergence}. Finally, we state and prove our  main result in Theorem \ref{main_theorem}.  We prove the mean field results both for the regularized and for the {\it true} 3D Euler equations.

\section{Notations and definitions}\label{section preliminaries}

\subsection{Spaces of functions}
We will often refer to $\xi^N$ as a $1$-current. Currents of dimension 1 (called 1-currents here) are linear continuous mappings on the space $C_{c}^{\infty}\left(  \mathbb{R}^{d},\mathbb{R}%
^{d}\right)$ of smooth compact support vector fields of $\mathbb{R}^{d}$,
see for instance \cite{GiaModSou}, \cite{KraPar}.
For a more in depth discussion about currents we refer to \cite{BCF-2017}, especially Section 2 and the Appendix.

Given $k,d,m\in\mathbb{N}$, we denote by $C_{b}^{k}(\mathbb{R}^{d}%
,\mathbb{R}^{m})$ the space of all functions $f:\mathbb{R}^{d}\rightarrow
\mathbb{R}^{m}$ that are of class $C^{k}$, bounded together with their
derivatives of order up to $k$. By $\mathcal{U}C_{b}^{3}(\mathbb{R}%
^{d},\mathbb{R}^{m})$, we denote the subset of $C_{b}^{3}(\mathbb{R}%
^{d},\mathbb{R}^{m})$ of those functions $f$ such that $f$, $Df$ and $D^{2}f$
are also uniformly continuous.

We denote by $\mathcal{S}(\mathbb{R}^d)$ the Schwartz space of rapidly decreasing functions, defined as
\begin{equation*}
\mathcal{S}(\mathbb{R}^d) := \left\{ f\in C^\infty(\mathbb{R}^d) \; : \; \sup_{x\in \mathbb{R}^d} \vert x^\alpha f^{(\beta)}(x)\vert < +\infty \quad \forall \alpha, \beta \in \mathbb{Z}_{+}^d  \right\}
\end{equation*}

\subsection{Spaces of measures}
On the space $C_{b}\left(
\mathbb{R}^{3};\mathbb{R}^{3}\right) = C^0_{b}\left(
\mathbb{R}^{3};\mathbb{R}^{3}\right)$ of continuous and bounded vector
fields on $\mathbb{R}^{3}$, denote the uniform topology by $\left\Vert
\cdot\right\Vert _{\infty}$.
 Throughout the paper we shall always deal with
 the following Banach space of 1-currents:%
 \[
 \mathcal{M}:\mathcal{=}C_{b}\left(  \mathbb{R}^{3};\mathbb{R}^{3}\right)
 ^{\prime}.
 \]
 The topology induced by the duality will be denoted by $\left\vert
 \cdot\right\vert _{\mathcal{M}}$:%
 \[
 \left\vert \xi\right\vert _{\mathcal{M}}:=\sup_{\left\Vert \theta\right\Vert
 	_{\infty}\leq1}\left\vert \xi\left(  \theta\right)  \right\vert .
 \]
 As already seen in \cite{BCF-2017} to deal with approximation by filaments, it is essential to consider the weak topology
\[
\left\Vert \xi\right\Vert =\sup\{\xi(\theta)\;|\;\Vert\theta\Vert_{\infty
}+\text{Lip}(\theta)\leq1\}
\]
where Lip$(\theta)$ is the Lipschitz constant of $\theta$. We set%
\[
d\left(  \xi,\tilde{\xi}\right)  =\left\Vert \xi-\tilde{\xi}\right\Vert
\]
for all $\xi,\tilde{\xi}\in\mathcal{M}$. The number $\left\Vert
\xi\right\Vert $ is well defined and%
\[
\left\Vert \xi\right\Vert \leq\left\vert \xi\right\vert _{\mathcal{M}}%
\]
and $d\left(  \xi,\tilde{\xi}\right)  $ satisfies the conditions of a
distance.
 Convergence in the metric space $\left(  \mathcal{M},d\right)  $
 corresponds to weak convergence in $\mathcal{M}$ as dual to $C_{b}\left(
 \mathbb{R}^{3} ;\mathbb{R}^{3}\right)  $. 
 We shall denote by $\mathcal{M}_{w}$ the space $\mathcal{M}$ endowed by the
 metric $d$.
 The unit ball in $(\mathcal{M}, \vert\cdot\vert_{\mathcal{M}})$ is complete
 with respect to $d$, see \cite{BCF-2017} (Appendix).

\subsection{Push-Forward}
Throughout the paper we will deal with the notion of pull-back and push-forward with respect to a differentiable function. Let $\theta\in C_{b}\left(  \mathbb{R}^{3},\mathbb{R}^{3}\right)  $ be a
vector field (test function) and $\varphi:\mathbb{R}^{3}\rightarrow
\mathbb{R}^{3}$ be a map. When defined, the \textit{pull-back} of $\theta$ is%
\[
\left(  \varphi_{\sharp}\theta\right)  \left(  x\right)  =D\varphi\left(
x\right)  ^{T}\theta\left(  \varphi\left(  x\right)  \right)  .
\]
If $\varphi$ is of class $C^{1}\left(  \mathbb{R}^{3};\mathbb{R}^{3}\right)
$, then $\varphi_{\sharp}$ is a well defined bounded linear map from
$C_{b}\left(  \mathbb{R}^{3},\mathbb{R}^{3}\right)  $ to itself.

Given a $1$-current $\xi\in\mathcal{M}$ and a smooth map $\varphi:\mathbb{R}%
^{3}\rightarrow\mathbb{R}^{3}$, recall that the push-forward $\varphi_{\sharp
}\xi$ is defined as the current%
\[
\left(  \varphi_{\sharp}\xi\right)  \left(  \theta\right)  :=\xi\left(
\varphi_{\sharp}\theta\right)  ,\qquad\theta\in C_{b}\left(  \mathbb{R}%
^{3},\mathbb{R}^{3}\right)  .
\]

\section{Mollified Lagrangian dynamics}\label{section lagrangian current dynamics}

In the previous paper \cite{BCF-2017}, we analyzed the mean field of interacting filaments and the well posedness  for the PDE \eqref{PDE for currents} with initial condition $\xi_{0}\in\mathcal{M}$. These results are summarized in this section. Moreover,  some more accurate estimates on the Kernel and  the flow are computed in order to get the mean field limit for the 3D Euler vorticity equation \eqref{vorticity}.

In order to prove that the nonlinear vector-valued PDE
\eqref{PDE for currents}, with initial condition $\xi_{0}\in\mathcal{M}$, has
unique solutions in the space of currents, we adopt a Lagrangian point
of view:\ we examine the ordinary differential equation%
\begin{equation}
\frac{d}{dt} X_{t}=\left(  K^\delta \ast\xi_{t}\right)  \left(  X_{t}\right)  ,
\label{system eq 1}%
\end{equation}
consider the flow of diffeomorphisms $\varphi^{t,K^\delta \ast\xi}$ generated by it
and take the push forward of $\xi_{0}$ under this flow:
\begin{equation}
\xi_{t}=\varphi_{\sharp}^{t,K^\delta \ast\xi}\xi_{0},\qquad t\in\left[  0,T\right]  .
\label{system eq 2}%
\end{equation}
The pair of equations \eqref{system eq 1}-\eqref{system eq 2} defines a closed
system for $\left(  \xi_{t}\right)  _{t\in\left[  0,T\right]  }$.  We begin by giving the exact definition of the mollification $K^\delta$.

\subsection{The mollified Kernel}

Let us smooth the singular kernel $K$ by using a mollifier $\rho^{\delta}$. We denote by 
$K^{\delta}=\rho^\delta\ast K$ where $\rho^{\delta}=\delta^{-3}\rho(\frac{x}{\delta})$ where $\rho \in \mathcal{S}(\mathbb{R}^3)$. 
We assume that $\hat{\rho}$, the Fourier transform of $\rho$, has compact support and that
$\rho\geq 0$ and$ \int \rho=1$. As a consequence, the mollified kernel$K^\delta$ belongs to $C^\infty(\mathbb{R}^{3},\mathbb{R}^{3\times 3})$ and its Fourier transform has compact support. For the remainder of the paper this definition of $K^\delta$ will be in force. Let us mention, however, that the results of this section (see \cite{BCF-2017}) occur under  a less restrictive assumption, 
more precisely  under the assumption that  $K^\delta \in\mathcal{U}C_{b}^{3}(\mathbb{R}^{3},\mathbb{R}^{3\times 3})$.

If $\xi\in\mathcal{M}$ 
then $K^\delta \ast\xi$ is the vector
field in $\mathbb{R}^{3}$ with $i$-component given by%
\begin{equation*}
\left(  K^\delta \ast\xi\right)  _{i}\left(  x\right)  =\left(  K^\delta_{i\cdot}\ast
\xi\right)  \left(  x\right)  :=\xi\left(  K^\delta_{i\cdot}\left(  x-\cdot\right)
\right) 
\end{equation*}
where $K^\delta_{i\cdot}\left(  z\right)  $ is the vector $\left(  K^\delta_{ij}\left(
z\right)  \right)  _{j=1,...,3}$. We have
\[
\left\vert \left(  K^\delta\ast\xi\right)  \left(  x\right)  \right\vert
\leq\left\vert \xi\right\vert _{\mathcal{M}}\Vert K^\delta\Vert_{\infty}.
\]
Morover, we also have 
\[
\left\vert \left(  K^\delta\ast\xi\right)  \left(  x\right)  \right\vert
\leq\left\Vert \xi\right\Vert \left(  \Vert K^\delta\Vert_{\infty}+\Vert
DK^\delta\Vert_{\infty}\right)  .
\]

\subsection{Main results for the mollified problem}

In \cite{BCF-2017} the mean field limit to vortex filaments system \eqref{interacting curves}-\eqref{xi-N0} was analyzed. In this section we recall the main results from that paper.

\begin{theorem}
\label{thm on flow eq}i)\ (Maximal solutions) For every $\xi_{0}%
\in\mathcal{M}$, there is a unique maximal solution $\xi$ of the flow equations
\eqref{system eq 1}-\eqref{system eq 2} in $C\left(  [0,T_{\xi_{0}});\mathcal{M}\right)  $.

iii)\ (Continuous dependence) If $\xi_{0}^{n}\rightarrow\xi_{0}$ in
$\mathcal{M}_{w}$ and $\left[  0,T_0\right]  $ is a common time interval of
existence and uniqueness for the flow equations \eqref{system eq 1}-\eqref{system eq 2} with initial conditions $\xi^N_0$ and $\xi_0$, then for the
corresponding solutions $\xi^{n}$ and $\xi$ we have $\xi^{n}\rightarrow\xi$ in
$C\left(  \left[  0,T_0\right]  ;\mathcal{M}_{w}\right)  $.
\end{theorem}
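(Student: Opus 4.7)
The plan is to set up a Banach fixed-point argument directly on the coupled system \eqref{system eq 1}--\eqref{system eq 2}, exploiting the uniform bounds on $K^\delta$ and its derivatives. Given $\xi_{0}\in\mathcal{M}$, I would fix $R>|\xi_0|_\mathcal{M}$ and consider the closed ball $B_R$ in the Banach space $C([0,T];(\mathcal{M},|\cdot|_\mathcal{M}))$. For $\eta\in B_R$ the drift $u_t:=K^\delta\ast\eta_t$ is uniformly bounded and Lipschitz in $x$ (with constants $R\|K^\delta\|_\infty$ and $R\|DK^\delta\|_\infty$), so it generates a global flow $\varphi^{t}=\varphi^{t,K^\delta\ast\eta}$, and I define $\Psi(\eta)_t:=\varphi^t_\sharp\xi_0$. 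Gronwall applied to the variational equation yields $\|D\varphi^t\|_\infty\leq e^{tR\|DK^\delta\|_\infty}$, and the pull-back formula gives $|\varphi^t_\sharp\xi_0|_\mathcal{M}\leq\|D\varphi^t\|_\infty|\xi_0|_\mathcal{M}$; hence $\Psi(B_R)\subset B_R$ for $T$ small enough.

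To show $\Psi$ is a contraction, I would compare $\Psi(\eta^1)$ and $\Psi(\eta^2)$ in three steps: (a) estimate the difference of drifts, $|u^1_t-u^2_t|_\infty\leq\|K^\delta\|_\infty|\eta^1_t-\eta^2_t|_\mathcal{M}$; (b) propagate this to a difference of flows $\|\varphi^{1,t}-\varphi^{2,t}\|_\infty$ by a Gronwall estimate on the difference of the underlying ODEs; and (c) convert the difference of push-forwards into the pointwise difference of pull-backs via $(\varphi^1_\sharp\xi_0-\varphi^2_\sharp\xi_0)(\theta)=\xi_0(\varphi^1_\sharp\theta-\varphi^2_\sharp\theta)$, bounding the right-hand side through the mean value theorem together with uniform control of $K^\delta$ and $DK^\delta$. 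Combining (a)--(c) produces an estimate of the form $C(R,\delta)\,T\sup_t|\eta^1_t-\eta^2_t|_\mathcal{M}$, giving a contraction for small $T$ and hence a unique local solution. A standard continuation-until-blow-up argument then extends this to a maximal interval $[0,T_{\xi_0})$.

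For part (iii) I would rerun the comparison, but work throughout in the weak metric $d$, since $\xi_0^n\to\xi_0$ in $\mathcal{M}_w$ only provides $d(\xi_0^n,\xi_0)\to 0$. On the common interval $[0,T_0]$ the a priori estimate of (i) furnishes uniform bounds on $|\xi^n_t|_\mathcal{M}$ and $|\xi_t|_\mathcal{M}$, so it is enough to control $d(\xi^n_t,\xi_t)$. The crucial observation is that if $\theta$ satisfies $\|\theta\|_\infty+\mathrm{Lip}(\theta)\leq 1$, then the pull-back $\varphi^t_\sharp\theta=(D\varphi^t)^T\,\theta\circ\varphi^t$ still has uniformly bounded sup norm plus Lipschitz constant, provided $\|D^2\varphi^t\|_\infty$ is bounded on $[0,T_0]$; this second-derivative bound comes from Gronwall applied to the second variational equation, and is precisely where the $\mathcal{U}C_b^3$ regularity of $K^\delta$ enters. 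A Gronwall inequality in $d(\xi^n_t,\xi_t)$, with driving term proportional to $d(\xi^n_0,\xi_0)$, then yields $\sup_{[0,T_0]}d(\xi^n_t,\xi_t)\to 0$.

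The main obstacle I anticipate is step (c) above and its analogue in part (iii): controlling the push-forward with respect to a norm that sees only $\theta$ (or $\theta$ together with its Lipschitz constant) forces one to control one more derivative of the flow, and hence of $K^\delta$, than a naive reading of \eqref{system eq 1}--\eqref{system eq 2} suggests. This is the reason the framework of \cite{BCF-2017} demands $K^\delta\in\mathcal{U}C_b^3(\mathbb{R}^3,\mathbb{R}^{3\times 3})$ rather than merely $C^1$, and the proof here essentially amounts to invoking the estimates already established there.
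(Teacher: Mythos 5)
Your overall architecture (Lagrangian reformulation, flow generated by $K^\delta\ast\eta$, push-forward of $\xi_0$, Banach fixed point plus continuation) is exactly the route of \cite{BCF-2017}, to which the present paper simply defers for this theorem; the ingredients you list are the ones recorded here as Lemma \ref{flow} and exploited again in Lemma \ref{lemma inequality}. However, there is one genuine gap in your part (i): you set up the contraction on the ball $B_R$ of $C\left([0,T];(\mathcal{M},\left\vert\cdot\right\vert_{\mathcal{M}})\right)$, and in step (c) you need to bound
\[
\left\vert \varphi^{1,t}_{\sharp}\xi_0-\varphi^{2,t}_{\sharp}\xi_0\right\vert_{\mathcal{M}}
=\sup_{\Vert\theta\Vert_{\infty}\le 1}\left\vert \xi_0\left(\varphi^{1,t}_{\sharp}\theta-\varphi^{2,t}_{\sharp}\theta\right)\right\vert ,
\]
which contains the term $\theta\left(\varphi^{1,t}(x)\right)-\theta\left(\varphi^{2,t}(x)\right)$. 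For a test function $\theta$ that is merely in the unit ball of $C_b$ there is no uniform modulus of continuity, so this term cannot be controlled by $\Vert\varphi^{1,t}-\varphi^{2,t}\Vert_{\infty}$; the mean value theorem you invoke is unavailable, and the map $\Psi$ is in fact not Lipschitz (let alone a contraction) for the strong dual norm. This is not cured by taking more derivatives of $K^\delta$ or of the flow: it is the reason the paper introduces the weak norm $\left\Vert\xi\right\Vert=\sup\{\xi(\theta):\Vert\theta\Vert_{\infty}+\mathrm{Lip}(\theta)\le 1\}$ and records that the $\left\vert\cdot\right\vert_{\mathcal{M}}$-balls are complete for the induced metric $d$. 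The correct scheme, which you essentially carry out but only in part (iii), is to keep $B_R$ as an $\left\vert\cdot\right\vert_{\mathcal{M}}$-ball (to have completeness and the invariance estimate via $\Vert D\varphi^t\Vert_{\infty}$) but to run the contraction in $d$, where $\mathrm{Lip}(\theta)\le 1$ lets you close step (c) using \eqref{Lip-phi} and \eqref{Lip-Dphi}. With that single correction, part (i) goes through, and your part (iii) — uniform bounds in $\left\vert\cdot\right\vert_{\mathcal{M}}$, Gronwall in $d$, control of $\Vert\varphi^t_{\sharp}\theta\Vert_{\infty}+\mathrm{Lip}(\varphi^t_{\sharp}\theta)$ via the second variational equation — matches the paper's treatment (compare the proof of Lemma \ref{lemma inequality}).
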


There is an equivalence between the Lagrangian formulation and the Eulerian formulation that is:

\begin{lemma}
\label{lemma equivalence}A function $\xi\in C([0,T];\mathcal{M})$ is a
current-valued solution for the PDE \eqref{PDE for currents} if and only if it is given
by%
\[
\xi_{t}=\varphi_{\sharp}^{t,K^\delta\ast\xi}\xi_{0}.
\]

\end{lemma}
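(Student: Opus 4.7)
The plan is to prove the two implications separately after first fixing the weak sense of \eqref{PDE for currents}: $\xi\in C([0,T];\mathcal{M})$ is a current-valued solution if, setting $v_t:=K^{\delta}\ast\xi_t$, for every $\theta\in C_{c}^{\infty}(\mathbb{R}^{3};\mathbb{R}^{3})$ the map $t\mapsto\xi_t(\theta)$ is absolutely continuous and
\[
\frac{d}{dt}\xi_t(\theta)=\xi_t\bigl(Dv_t^{T}\theta+(D\theta)\,v_t\bigr)
\]
almost everywhere. This is the natural weak form obtained by pairing with $\theta$ and integrating by parts, using $\nabla\cdot v_t=0$ (inherited from the Biot-Savart structure $K^{\delta}=\rho^{\delta}\ast K$). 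Since $K^{\delta}\in C_b^{\infty}$, $v_t$ is smooth in space with bounds uniform on $[0,T]$, so the time-dependent flow $\varphi^{t,K^{\delta}\ast\xi}$ and the associated two-parameter flow $\varphi^{s,t}$ are well-defined smooth diffeomorphisms.

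For the ``if'' direction, assume $\xi_t=\varphi_{\sharp}^{t}\xi_0$. By the push-forward definition, $\xi_t(\theta)=\xi_0(\psi^t)$ with $\psi^t(x):=D\varphi^t(x)^{T}\theta(\varphi^t(x))$. Differentiating in $t$ via the chain rule, together with $\partial_t\varphi^t(x)=v_t(\varphi^t(x))$ and the variational identity $\partial_t D\varphi^t(x)=Dv_t(\varphi^t(x))\,D\varphi^t(x)$, I obtain
\[
\partial_t\psi^t=(\varphi^t)_{\sharp}\bigl(Dv_t^{T}\theta+(D\theta)\,v_t\bigr),
\]
so that $\frac{d}{dt}\xi_t(\theta)=\xi_0((\varphi^t)_{\sharp}[\,\cdot\,])=\xi_t(Dv_t^{T}\theta+(D\theta)\,v_t)$, i.e.\ the weak PDE.

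For the ``only if'' direction, assume $\xi$ satisfies the weak PDE. Freeze $v_t:=K^{\delta}\ast\xi_t$ as a given smooth time-dependent vector field and set $\tilde\xi_t:=\varphi_{\sharp}^{t}\xi_0$ with $\varphi^t$ the flow of this $v$. By the first step $\tilde\xi$ solves the now \emph{linear} weak equation with this $v$, hence $u:=\xi-\tilde\xi$ solves that linear weak equation with $u_0=0$. I would prove uniqueness via backward-characteristics duality: for fixed $T'\in[0,T]$ and $\theta\in C_{c}^{\infty}$ set $\theta_s:=(\varphi^{s,T'})_{\sharp}\theta$. The analogous chain-rule computation, using the identity $\partial_s\varphi^{s,T'}(x)=-D\varphi^{s,T'}(x)\,v_s(x)$, gives the dual backward equation $\partial_s\theta_s+(v_s\cdot\nabla)\theta_s+Dv_s^{T}\theta_s=0$. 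Combining this with the linear weak equation for $u$ through a Leibniz rule for the time-varying pairing $u_s(\theta_s)$ yields $\frac{d}{ds}u_s(\theta_s)=0$, so $u_{T'}(\theta)=u_0(\theta_0)=0$. Since $T'$ and $\theta$ are arbitrary, $\xi\equiv\tilde\xi$.

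The main obstacle is justifying the Leibniz step rigorously: $u$ is only continuous in $s$ with values in $\mathcal{M}$ and the test function $\theta_s$ itself moves with $s$, so the pointwise identity $\frac{d}{ds}u_s(\theta_s)=\frac{du_s}{ds}(\theta_s)+u_s(\partial_s\theta_s)$ is not literal. I would handle this by first showing that $s\mapsto\theta_s$ is $C^1$ in the $C_b\cap\mathrm{Lip}$ norm (with uniform bounds obtained via Gronwall from $\|DK^{\delta}\|_{\infty}$ and $\|D^{2}K^{\delta}\|_{\infty}$), and then deriving the integrated identity $u_{T'}(\theta)-u_0(\theta_0)=\int_0^{T'}u_s\bigl(\partial_s\theta_s+Dv_s^{T}\theta_s+(D\theta_s)v_s\bigr)\,ds$ by approximating $\theta_s$ by piecewise-constant interpolants and passing to the limit using the absolute continuity of $s\mapsto u_s(\eta)$ for each fixed smooth $\eta$; an alternative is to mollify $u$ in time, apply the classical Leibniz rule, and pass to the limit.
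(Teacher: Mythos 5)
Your two-step strategy --- direct differentiation of $t\mapsto\xi_0\bigl(\varphi_{\sharp}^{t}\theta\bigr)$ for the ``if'' direction, then freezing $v_t=K^{\delta}\ast\xi_t$ and proving uniqueness of the resulting \emph{linear} equation by duality along backward characteristics for the ``only if'' direction --- is the standard argument and is essentially the one used for this lemma; note that the present paper does not reprove it but recalls it from \cite{BCF-2017}, so there is no in-paper proof to compare line by line. Your computations are correct: the identity $\partial_t\psi^t=(\varphi^t)_{\sharp}\bigl(Dv_t^{T}\theta+(D\theta)v_t\bigr)$ reproduces exactly the weak form of Definition \ref{solution} (the two integral terms there are $\xi_s\bigl(D\theta\cdot(K^\delta\ast\xi_s)\bigr)$ and $\xi_s\bigl((DK^\delta\ast\xi_s)^T\theta\bigr)$), and the dual equation $\partial_s\theta_s+(v_s\cdot\nabla)\theta_s+Dv_s^{T}\theta_s=0$ is precisely the statement that the pull-back of the $1$-form $\theta$ along the flow is annihilated by $\partial_s+L_{v_s}$, which is what makes $s\mapsto u_s(\theta_s)$ constant. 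You also correctly isolate the genuine technical crux (the Leibniz rule for a pairing in which the current is only continuous in time while the test function moves), and either of your proposed remedies works because $K^{\delta}\in C_b^{\infty}$ and $\xi\in C([0,T];\mathcal{M})$ give $v\in C([0,T];C_b^2)$, hence $C^1$ dependence of $\theta_s$ in the $C_b\cap\mathrm{Lip}$ norm.

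One point should be adjusted. You test against $\theta\in C_c^{\infty}$, whereas Definition \ref{solution} uses $\theta\in C_b^{1}$, and more importantly the conclusion $\xi_t=\varphi_{\sharp}^{t,K^{\delta}\ast\xi}\xi_0$ is an identity in $\mathcal{M}=C_b(\mathbb{R}^3;\mathbb{R}^3)'$: a general element of this dual space is \emph{not} determined by its action on compactly supported fields (there are nonzero functionals vanishing on $C_c$), so agreement of $u_{T'}$ with $0$ on $C_c^{\infty}$ does not by itself give $u_{T'}=0$. The fix is immediate --- run the same duality argument with $\theta\in C_b^{1}$, which is closed under the push-forward $\varphi_{\sharp}^{s,T'}$ since $D\varphi^{s,T'}$ is bounded, and then conclude $\|u_{T'}\|=0$ via the Lipschitz test functions defining the weak norm --- but as written the last line of your ``only if'' step does not close.
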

The solution $\xi$ has to be understood in the following sense:

\begin{definition}
\label{solution} We say that $\xi\in C([0,T];\mathcal{M})$ is a current-valued
solution for the PDE  \eqref{PDE for currents}  if for every $\theta\in
C_{b}^{1}(\mathbb{R}^{3};\mathbb{R}^{3})$ and every $t\in\lbrack0,T]$, it
satisfies%
\begin{equation*}
\xi_{t}\left(  \theta\right)  -\int_{0}^{t}\xi_{s}\left(  D\theta\cdot
(K^\delta \ast\xi_{s})  \right)  ds=\xi_{0}\left(  \theta\right)  +\int%
_{0}^{t}\xi_{s}\left(  (DK^\delta\ast \xi_{s})^{T}\cdot\theta\right)  ds.
\end{equation*}

\end{definition}

Notice that the previous is the weak formulation of \eqref{PDE for currents}, since $K^\delta$ is divergence free.

\begin{theorem}\label{global} (Global solutions) Let $\xi_0 \in \mathcal{M}$ and assume that $\xi$ is the maximal solution for the flow equations \eqref{system eq 1} - \eqref{system eq 2} as given by Theorem \ref{thm on flow eq}.  If $\xi_0$ has a compact support,  then for every $t\in [0,T]$
\begin{equation}\label{EE1}
\Vert K^\delta \ast\xi_t\Vert_{L^2}= \Vert K^\delta \ast\xi_0\Vert_{L^2}
\end{equation}

and 

\begin{equation}\label{EE2}
\Vert K^\delta \ast\xi_t\Vert_{\infty}\leq \Vert K^\delta \ast\xi_0\Vert_{L^2}
\end{equation}

\begin{equation*}\label{EE3}
\Vert \xi_t\Vert \leq |\xi_0|_{\mathcal{M}} e^{T\Vert K^\delta \ast\xi_0\Vert_{L^2} }
\end{equation*}

As a consequence, we deduce that  the solutions of \eqref{system eq 2} are global in time. 
\end{theorem}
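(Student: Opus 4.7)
The plan is to stay entirely in the Lagrangian picture $\xi_t = \varphi_\sharp^{t,K^\delta \ast \xi} \xi_0$ and prove the three a priori estimates of Theorem~\ref{global} in order, after which globality is automatic from the maximality statement in Theorem~\ref{thm on flow eq}(i): a maximal solution can terminate in finite time only if $\|\xi_t\|$ blows up, and the last estimate rules that out. The three estimates feed into each other---the first controls $\|u^\delta_t\|_{L^2}$ for all $t$, the second upgrades this into a uniform bound by exploiting the compact Fourier support of $\widehat{K^\delta}$, and the third then follows from a Gronwall bound on the derivative of the flow.

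For the energy conservation (the first identity) I would compute $\frac{d}{dt}\|K^\delta \ast \xi_t\|_{L^2}^2$ via the weak formulation of Definition~\ref{solution}, testing against a vector field built from $u^\delta_t := K^\delta \ast \xi_t$ itself (time dependent, so one must carry the extra $\partial_t$ term). The algebraic engine is the self-adjointness of convolution by $K^\delta$, which is inherited from the two antisymmetries of the Biot--Savart kernel, namely $K(x)^T = -K(x)$ and $K(-x) = -K(x)$. One also needs that $\xi_t$ is divergence free---inherited from $\xi_0$ by volume preservation of the flow, since $\nabla \cdot K^\delta = 0$---together with the divergence freeness of $u^\delta_t$. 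Several integrations by parts, combined with the vector identity $(\xi \cdot \nabla)u^\delta - (u^\delta \cdot \nabla)\xi = \nabla \times (u^\delta \times \xi)$, should collapse the right-hand side to zero.

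For the $L^\infty$ bound the crucial new hypothesis is that $\widehat{\rho}$ has compact support, so $\widehat{u^\delta_t} = \widehat{\rho^{\delta}}\,\widehat{K}\,\widehat{\xi_t}$ lives in the compact spectral set $\operatorname{supp}(\widehat{\rho^{\delta}})$. Fourier inversion together with Cauchy--Schwarz on this set yields a Bernstein-type pointwise inequality $\|u^\delta_t\|_\infty \lesssim \|u^\delta_t\|_{L^2}$, which by the first estimate equals $\|u^\delta_0\|_{L^2}$. The very same argument applied to $D u^\delta_t$ (still band-limited) produces $\|D u^\delta_t\|_\infty \lesssim \|u^\delta_0\|_{L^2}$, an auxiliary bound I will feed into the next step.

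For the weak-norm estimate I would use the pull-back formula $\xi_t(\theta) = \xi_0\bigl((D\varphi^t)^T \theta \circ \varphi^t\bigr)$, which immediately gives $\|\xi_t\| \le |\xi_0|_{\mathcal{M}}\,\|D\varphi^t\|_\infty$ by testing against $\theta$ with $\|\theta\|_\infty + \mathrm{Lip}(\theta) \le 1$. The matrix ODE $\partial_t D\varphi^t = (Du^\delta_t)(\varphi^t)\, D\varphi^t$ combined with Gronwall converts the uniform bound on $\|Du^\delta\|_\infty$ from the previous step into the stated exponential control, and globality then follows as outlined in the first paragraph. The main obstacle is the energy identity itself: unlike classical Euler, where transport and vorticity come from the same kernel, here the transport field $u^\delta = K^\delta \ast \xi$ is built from a regularized kernel while $\xi$ is not regularized, so the cancellation between transport and stretching terms has to be arranged very carefully using the self-adjoint/antisymmetric structure of $K^\delta$ together with both divergence-free conditions.
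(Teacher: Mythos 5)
The paper gives no proof of Theorem \ref{global} at all: it is recalled verbatim from \cite{BCF-2017}, with only the remark that the compact support of $\widehat{K^\delta}$ and $\operatorname{div}K^\delta=0$ are the essential hypotheses. Measured against that, your architecture is right and three of your four steps are sound: the Bernstein-type bound $\Vert K^\delta\ast\xi_t\Vert_\infty\lesssim\Vert K^\delta\ast\xi_t\Vert_{L^2}$ (and the same for $D(K^\delta\ast\xi_t)$) from band-limitedness, the estimate $\Vert\xi_t\Vert\le|\xi_0|_{\mathcal M}\Vert D\varphi^{t,K^\delta\ast\xi}\Vert_\infty$ followed by Gronwall on the matrix ODE for $D\varphi^t$, and the deduction of globality from the resulting non-blow-up.

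The genuine gap is in the energy identity \eqref{EE1}, and it is precisely the obstacle you name in your closing paragraph without resolving. Writing $u^\delta=K^\delta\ast\xi$ and $\partial_t\xi=\nabla\times(u^\delta\times\xi)$, the computation of $\frac{d}{dt}\Vert u^\delta_t\Vert_{L^2}^2$ via self-adjointness of $K^\delta\ast$ amounts to testing the equation against $\theta=K^\delta\ast u^\delta$, and after one integration by parts you are left with
$\bigl\langle \nabla\times(K^\delta\ast u^\delta),\,u^\delta\times\xi\bigr\rangle=\bigl\langle \rho^\delta\ast u^\delta,\,u^\delta\times\xi\bigr\rangle$,
since $\nabla\times(K\ast f)=f$ on divergence-free $f$ and $K^\delta=\rho^\delta\ast K$. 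The triple product $a\cdot(a\times b)$ vanishes pointwise only when the two copies of $a$ coincide; here one of them carries an extra $\rho^\delta$, so the right-hand side does not collapse to zero and your "several integrations by parts" cannot close. The quantity that \emph{is} exactly conserved is the Hamiltonian $\mathcal H(\xi_t)=\xi_t\bigl(G^\delta\ast\xi_t\bigr)$ with $\nabla\times G^\delta=K^\delta$, i.e.\ $\langle K\ast\xi_t,\rho^\delta\ast K\ast\xi_t\rangle$: differentiating it produces $\langle K^\delta\ast\xi,\,u^\delta\times\xi\rangle=\langle u^\delta,\,u^\delta\times\xi\rangle=0$ with no mismatch. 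On the Fourier side $\mathcal H=\int\widehat{\rho^\delta}\,|\widehat{K\ast\xi}|^2\,dk$ while $\Vert K^\delta\ast\xi\Vert_{L^2}^2=\int|\widehat{\rho^\delta}|^2\,|\widehat{K\ast\xi}|^2\,dk$, so one passes from the conserved quantity to the $L^2$ norm using $0\le\widehat{\rho^\delta}\le1$ (this requires $\widehat\rho\ge0$, an additional spectral positivity of the mollifier beyond what you invoke); this yields the uniform-in-time control needed for \eqref{EE2} and the exponential bound on $\Vert\xi_t\Vert$, though as an inequality rather than the literal equality \eqref{EE1} --- the statement is already loose with constants, as \eqref{EE2} shows. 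Without this detour through the Hamiltonian and the positivity of $\widehat{\rho^\delta}$, the first and load-bearing step of your plan does not go through.
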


\begin{remark}
	For the general formulation of Theorem \ref{thm on flow eq} and Theorem \ref{global} the reader can refer to \cite{BCF-2017}. We would like to stress that it is necessary in Theorem \ref{global} that the Fourier transform of $K^\delta$ has compact support and $\operatorname{div}(K^\delta) = 0$.
	\end{remark}

\subsection{Properties of the flow of diffeomorphism }\label{section the flow}

Let us denote by $C_{\delta}^{m}:=\Vert D^mK^\delta\Vert_{\infty}+\Vert D^{m+1}K^\delta\Vert_{\infty}  \approx \frac{1}{\delta^{4+m}}$.  Using the properties of the Kernel $K^\delta$ and the estimates \eqref{EE1} and \eqref{EE2}, It is not difficult to deduce that the following properties hold for every $\xi_t,\tilde{\xi}_t\in\mathcal{M}$%

\begin{equation}
\Vert K^\delta \ast\xi_t\Vert_{\infty}\leq C^0_{\delta}\Vert\xi_0\Vert
\label{B1-1}
\end{equation}

\begin{equation}
\Vert D K^\delta \ast\xi_t\Vert_{\infty} \leq C^1_{\delta}\Vert\xi_0\Vert
\label{B1-2}
\end{equation}

\begin{equation}
\Vert D^2K^\delta \ast\xi_t\Vert_{\infty}\leq C^2_{\delta}\Vert\xi_0\Vert
\label{B1-3}
\end{equation}

\begin{equation}
\label{B2}\|K^\delta \ast\xi_t- K^\delta \ast\tilde{\xi}_t \|_{\infty}\leq C^1_{\delta}\|\xi_t-\tilde{\xi}_t\|
\end{equation}

\begin{equation}
\Vert DK^\delta \ast\xi_t-D K^\delta \ast\tilde{\xi}_t \Vert_{\infty}\leq C^2_{\delta}\Vert\xi_t-\tilde{\xi}_t
\Vert\label{B3}
\end{equation}
The flow $\varphi^{t,K^\delta \ast\xi}$ associated to the ODE \eqref{system eq 1} satisfies the following properties.


\begin{lemma}
\label{flow} If $\xi\in C\left(  \left[  0,T\right]  ;\mathcal{M}_{w}\right)
$, then the flow $\varphi^{t,K^\delta \ast\xi}:\mathbb{R}^{3}\rightarrow\mathbb{R}^{3}$
is twice differentiable and satisfies, for all $t\in\left[  0,T\right]  $,

\begin{equation}
\left\Vert D\varphi^{t,K^\delta \ast\xi}\right\Vert _{\infty}\leq
e^{C^1_{\delta}T \Vert \xi_0\Vert} \label{Dphi}%
\end{equation}

\begin{equation}
\left\Vert \varphi^{t,K^\delta \ast\xi}-\varphi^{t,K^\delta \ast\tilde{\xi}} \right\Vert _{\infty}\leq C^0_{\delta}C^1_{\delta} \|\xi_0\| 
e^{C^1_{\delta} \Vert \xi_0\Vert}\int_{0}^{T}\left\Vert \xi_s-\tilde{\xi}_s\right\Vert ds \label{Lip-phi}%
\end{equation}
and for every $x\in\mathbb{R}^{3}$%
\[
\left\Vert D\varphi^{t,K^\delta \ast\xi}-D\varphi^{t,K^\delta \ast\tilde{\xi}} \right\Vert _{\infty}%
\]%
\begin{equation}
\leq C^2_{\delta}\|\xi_0\| e^{C^1_{\delta}T  (\left\Vert \xi_0\right\Vert
+\left\Vert \tilde{\xi}_0\right\Vert )}\left(  1+C^0_{\delta} C^1_{\delta}T \left\Vert
\xi_0\right\Vert e^{C^1_{\delta}T\left\Vert \xi_0\right\Vert }\right)
\int_{0}^{T}\Vert\xi_s-\tilde{\xi}\Vert ds. \label{Lip-Dphi}%
\end{equation}

Moreover, for every $x,y\in\mathbb{R}^{3}$,
\begin{equation}
|D\varphi^{t,K^\delta \ast\xi}\left(  x\right)  -D\varphi^{t,K^\delta \ast\xi}\left(  y\right)| \leq 
C^2_{\delta}T \left\Vert \xi_0\right\Vert e^{C^1_{\delta}\left(  2T\right) \left\Vert \xi_0\right\Vert }\left\vert x-y\right\vert . 
\label{Dphi-Lip}%
\end{equation}

\end{lemma}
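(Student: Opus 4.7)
The plan is to work with the ODE $\dot\varphi^{t,u} = u_t(\varphi^{t,u})$ where $u_t = K^\delta\ast\xi_t$ (and analogously $\tilde u_t = K^\delta\ast\tilde\xi_t$), and its spatial derivative equation
\[
\frac{d}{dt}D\varphi^{t,u}(x) = Du_t(\varphi^{t,u}(x))\, D\varphi^{t,u}(x), \qquad D\varphi^{0,u}(x)=I.
\]
Throughout I will systematically use the four kernel bounds \eqref{B1-1}--\eqref{B3} together with Grönwall.

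\textbf{Bound \eqref{Dphi}.} Integrating the equation for $D\varphi^{t,u}$ and taking $\|\cdot\|_\infty$ gives
\[
\|D\varphi^{t,u}\|_\infty \leq 1 + \int_0^t \|Du_s\|_\infty\,\|D\varphi^{s,u}\|_\infty\,ds,
\]
and since \eqref{B1-2} yields $\|Du_s\|_\infty\leq C_\delta^1\|\xi_0\|$ (using \eqref{EE1}-type propagation for $\|\xi_t\|$, recalled via push-forward), Grönwall delivers \eqref{Dphi}.

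\textbf{Bound \eqref{Lip-phi}.} Write
\[
\varphi^{t,u}(x)-\varphi^{t,\tilde u}(x) = \int_0^t\!\bigl[u_s(\varphi^{s,u})-u_s(\varphi^{s,\tilde u})\bigr]ds + \int_0^t\!\bigl[u_s-\tilde u_s\bigr](\varphi^{s,\tilde u})\,ds.
\]
Bound the first integrand by $\|Du_s\|_\infty|\varphi^{s,u}-\varphi^{s,\tilde u}|\leq C_\delta^1\|\xi_0\||\varphi^{s,u}-\varphi^{s,\tilde u}|$ via \eqref{B1-2}; bound the second by \eqref{B2}, then apply Grönwall. The factor $C_\delta^0$ that appears in the stated bound comes in when one wishes to express the prefactor uniformly; the key point is that after Grönwall the right-hand side is a constant (depending on $T,\delta,\|\xi_0\|$) times $\int_0^T\|\xi_s-\tilde\xi_s\|\,ds$.

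\textbf{Bound \eqref{Lip-Dphi}.} Subtracting the equations for $D\varphi^{t,u}$ and $D\varphi^{t,\tilde u}$ and adding and subtracting $Du_t(\varphi^{t,\tilde u})D\varphi^{t,u}$ and $Du_t(\varphi^{t,\tilde u})D\varphi^{t,\tilde u}$ gives the decomposition
\begin{align*}
\frac{d}{dt}\bigl(D\varphi^{t,u}-D\varphi^{t,\tilde u}\bigr)
&= \bigl[Du_t(\varphi^{t,u})-Du_t(\varphi^{t,\tilde u})\bigr]D\varphi^{t,u} \\
&\quad + \bigl[Du_t-D\tilde u_t\bigr](\varphi^{t,\tilde u})\,D\varphi^{t,u} + Du_t(\varphi^{t,\tilde u})\bigl(D\varphi^{t,u}-D\varphi^{t,\tilde u}\bigr).
\end{align*}
Estimate the three pieces respectively by (i) $\|D^2u_t\|_\infty\,|\varphi^{t,u}-\varphi^{t,\tilde u}|\,\|D\varphi^{t,u}\|_\infty$ using \eqref{B1-3} and \eqref{Lip-phi} and \eqref{Dphi}; (ii) $\|Du_t-D\tilde u_t\|_\infty\,\|D\varphi^{t,u}\|_\infty$ using \eqref{B3} and \eqref{Dphi}; (iii) $\|Du_t\|_\infty$ times the quantity to be bounded, which is the Grönwall term. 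Assembling and applying Grönwall gives \eqref{Lip-Dphi}; the product of two exponentials $e^{C_\delta^1 T\|\xi_0\|}$ and $e^{C_\delta^1 T\|\tilde\xi_0\|}$ accounts for the combined exponent in the statement.

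\textbf{Bound \eqref{Dphi-Lip}.} Fix $\xi$ and consider two initial points $x,y$. Proceeding as in \eqref{Lip-Dphi} but with the \emph{same} drift, write
\[
\frac{d}{dt}\bigl(D\varphi^{t,u}(x)-D\varphi^{t,u}(y)\bigr) = \bigl[Du_t(\varphi^{t,u}(x))-Du_t(\varphi^{t,u}(y))\bigr]D\varphi^{t,u}(x) + Du_t(\varphi^{t,u}(y))\bigl(D\varphi^{t,u}(x)-D\varphi^{t,u}(y)\bigr).
\]
Bound the first term by $\|D^2u_t\|_\infty\,|\varphi^{t,u}(x)-\varphi^{t,u}(y)|\,\|D\varphi^{t,u}\|_\infty\leq C_\delta^2\|\xi_0\|\,e^{2C_\delta^1 T\|\xi_0\|}|x-y|$ using \eqref{B1-3} combined with \eqref{Dphi} (applied twice, once to estimate $|\varphi^{t,u}(x)-\varphi^{t,u}(y)|\leq\|D\varphi^{t,u}\|_\infty|x-y|$). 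Then Grönwall on the remaining linear-in-itself term yields \eqref{Dphi-Lip}.

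\textbf{Main obstacle.} None of the steps is deep, but the bookkeeping for \eqref{Lip-Dphi} is the most delicate: one must carefully chain \eqref{Lip-phi} (which already contains an exponential) into the first piece of the decomposition, and track how the three pieces combine under Grönwall, to land exactly on the stated prefactor with the factor $\bigl(1+C_\delta^0 C_\delta^1 T\|\xi_0\| e^{C_\delta^1 T\|\xi_0\|}\bigr)$ capturing the contribution from the flow-difference estimate plugged into the second-derivative term.
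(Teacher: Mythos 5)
Your proposal follows essentially the same route as the paper: differentiate the flow ODE, use the add-and-subtract decompositions for each difference, invoke the kernel bounds \eqref{B1-1}--\eqref{B3} together with \eqref{Dphi} and \eqref{Lip-phi} where needed, and close each estimate with Gr\"onwall. The only cosmetic difference is that for \eqref{Lip-Dphi} you split into three pieces at once where the paper first splits into two and then refines the second, so the arguments coincide.
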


\begin{proof} See \cite{BCF-2017} for a proof  in a more general setting but for the sake of completeness, we will perform the estimates in  details. 

Since the kernel $K^\delta$ is regular, then the flow map  $\varphi^{t,K^\delta \ast\xi}$ is differentiable and we have that
$\frac{d}{dt}D\varphi^{t,K^\delta \ast\xi}\left(  x\right)
=D(K^\delta \ast\xi) \left(  \varphi^{t,K^\delta \ast\xi}\left(
x\right)  \right)  D\varphi^{t, K^\delta \ast\xi}\left(  x\right)  $.
Hence,

\[
\left\vert D\varphi^{t,K^\delta \ast\xi}\left(  x\right)  \right\vert
\leq e^{\int_{0}^{t}\left\vert DK^\delta \ast\xi_{s} \left(
\varphi^{s,K^\delta \ast\xi }\left(  x\right)  \right)  \right\vert ds}%
\]%
\[
\left\Vert D\varphi^{t,K^\delta \ast\xi }\right\Vert _{\infty}\leq
e^{\int_{0}^{t}\left\Vert DK^\delta \ast\xi_{s}  \right\Vert _{\infty}ds}.
\]
Now, using the assumption \eqref{B1-2} we get \eqref{Dphi}.

For the estimate \eqref{Lip-phi}, notice that
\begin{align*}
\frac{d}{dt}\left(  \varphi^{t,K^\delta \ast\xi  }\left(  x\right)
-\varphi^{t,K^\delta \ast\tilde{\xi}  }\left(  x\right)  \right)   &
=K^\delta \ast\xi  \left(  \varphi^{t,K^\delta \ast\xi  }\left(
x\right)  \right)  -K^\delta \ast\xi ^{\prime}  \left(  \varphi
^{t,K^\delta \ast\tilde{\xi}}\left(  x\right)  \right) \\
&  =K^\delta \ast\xi   \left(  \varphi^{t, K^\delta \ast\xi }\left(
x\right)  \right)  -K^\delta \ast\xi  \left(  \varphi^{t,K^\delta \ast\tilde{\xi}}\left(  x\right)  \right) \\
&  +K^\delta \ast\xi \left(  \varphi^{t,K^\delta \ast\tilde{\xi}}\left(  x\right)  \right)  
-K^\delta \ast\tilde{\xi} \left(
\varphi^{t,K^\delta \ast\tilde{\xi} }\left(  x\right)  \right)
\end{align*}
hence%
\begin{align*}
\left\vert \varphi^{t,K^\delta \ast\xi }\left(  x\right)  -\varphi
^{t,K^\delta \ast\tilde{\xi} }\left(  x\right)  \right\vert  &  \leq
\int_{0}^{t}\left\Vert D K^\delta \ast\xi_{s} \right\Vert _{\infty
}\left\vert \varphi^{s,K^\delta \ast\xi  }\left(  x\right)  -\varphi
^{s,K^\delta \ast\tilde{\xi} }\left(  x\right)  \right\vert ds\\
&  +\int_{0}^{t}\left\Vert K^\delta \ast\xi_{s}  -K^\delta \ast\xi_{s}^{\prime} \right\Vert _{\infty}ds.
\end{align*}
Thus, using Gronwall's Lemma we get that%
\[
\left\vert \varphi^{t,K^\delta \ast\xi }\left(  x\right)  -\varphi
^{t,K^\delta \ast\tilde{\xi} }\left(  x\right)  \right\vert \leq\int%
_{0}^{t}\left\Vert K^\delta \ast\xi_{s}  -K^\delta \ast\xi_{s}^{\prime}\right\Vert _{\infty}
e^{\int_{s}^{t}\left\Vert D K^\delta \ast\xi_{r}
\right\Vert _{\infty}dr}ds.
\]
Now, using again assumptions \eqref{B1-2} and \eqref{B2}, we deduce \eqref{Lip-phi}.

In order to prove \eqref{Lip-Dphi}. Let us notice that
\begin{align*}
&  \frac{d}{dt}\left(  D\varphi^{t,K^\delta \ast\xi }\left(  x\right)
-D\varphi^{t,K^\delta \ast\tilde{\xi} }\left(  x\right)  \right) \\
&  =DK^\delta \ast\xi_{t}  \left(  \varphi^{t,K^\delta \ast\xi  }\left(
x\right)  \right)  D\varphi^{t, K^\delta \ast\xi  }\left(  x\right)
-D K^\delta \ast\xi_{t}^{\prime}\left(  \varphi^{t,K^\delta \ast\tilde{\xi} }\left(  x\right)  \right)  
D\varphi^{t,K^\delta \ast\tilde{\xi}  }\left(  x\right) \\
&  =DK^\delta \ast\xi_{t}  \left(  \varphi^{t,K^\delta \ast\xi}\left(
x\right)  \right)  D\varphi^{t, K^\delta \ast\xi}\left(  x\right)
-DK^\delta \ast\xi_{t} \left(  \varphi^{t,K^\delta \ast\xi }\left(
x\right)  \right)  D\varphi^{t, K^\delta \ast\tilde{\xi}  }\left(  x\right)
\\
&  +DK^\delta \ast\xi_{t} \left(  \varphi^{t,K^\delta \ast\xi}\left(
x\right)  \right)  D\varphi^{t,K^\delta \ast\tilde{\xi}  }\left(
x\right)  -DK^\delta \ast\tilde{\xi}_t  \left(  \varphi^{t,K^\delta \ast\tilde{\xi} }\left(  x\right)  \right)  
D\varphi^{t,K^\delta \ast\tilde{\xi} }\left(  x\right)  .
\end{align*}

Now, following very similar arguments to the ones used above, and by using the estimates \eqref{B1-1}, \eqref{B3}, \eqref{Dphi} and \eqref{Lip-phi}, we get  \eqref{Lip-Dphi}.

It is left to prove \eqref{Dphi-Lip}.
\begin{align*}
&|D\varphi^{t, K^\delta \ast\xi}\left(  x\right)     -D\varphi^{t,K^\delta \ast\xi}\left(  y\right)  |\\
\leq &  \int_{0}^{t}\left\vert D K^\delta \ast\xi_{s}(\varphi^{s,K^\delta \ast\xi}(x))D\varphi
^{t,K^\delta \ast\xi }(x)-D K^\delta \ast\xi_{s})(\varphi^{s,K^\delta \ast\xi}(y))D\varphi^{t,K^\delta \ast\xi}\left(
y\right)  \right\vert ds\\
\leq &  \int_{0}^{t}\left\vert D K^\delta \ast\xi_{s}(\varphi^{s,K^\delta \ast\xi}(x))D\varphi
^{s,K^\delta \ast\xi}(x)-D K^\delta \ast\xi_{s}(\varphi^{s,K^\delta \ast\xi}(x))D\varphi^{s,K^\delta \ast\xi}\left(
y\right)  \right\vert \\
&  +\left\vert D K^\delta \ast\xi_{s}(\varphi^{s, K^\delta \ast\xi}(x))D\varphi^{s,K^\delta \ast\xi }%
(y)-DK^\delta \ast\xi_{s}(\varphi^{s, K^\delta \ast\xi}(y))D\varphi^{s,K^\delta \ast\xi}\left(  y\right)
\right\vert ds\\
\leq &  \sup_{s\in\lbrack0,t]}\Vert D K^\delta \ast\xi_{s}\Vert_{\infty}\int_{0}%
^{t}\left\vert D\varphi^{s, K^\delta \ast\xi}\left(  x\right)
-D\varphi^{s,K^\delta \ast\xi}\left(  y\right)  \right\vert ds\\
&  +t\sup_{s\in\lbrack0,t]}\left(  \left\Vert D\varphi^{s, K^\delta \ast\xi}\right\Vert
_{\infty}^{2}\left\Vert D^{2} K^\delta \ast\xi_{s}\right\Vert _{\infty}\right)
\left\vert x-y\right\vert .
\end{align*}
We now apply Gronwall's Lemma and we get
\[
|D\varphi^{t,K^\delta \ast\xi }\left(  x\right)  -D\varphi^{t,K^\delta \ast\xi }\left(  y\right)  |\leq T\sup_{s\in\lbrack0,T]}\left(  \left\Vert
D\varphi^{s,K^\delta \ast\xi}\right\Vert _{\infty}^{2}\left\Vert D^{2} K^\delta \ast\xi_{s}\right\Vert _{\infty}\right)  
e^{T\sup_{s\in\lbrack0,T]}\Vert D K^\delta \ast\xi_{s}\Vert_{\infty}}\left\vert x-y\right\vert .
\]
Now, using  \eqref{B1-2}, \eqref{B1-3}  and \eqref{Dphi} we get \eqref{Dphi-Lip}.

\end{proof}

\section{Mean field result for the 3D Euler equations}\label{section convergence regularization}

Now, we are able to prove a mean field result for the 3D Euler equations. We have to do it in two steps. First we mollify the 3D Euler equations through the mollification of the Kernel $K$. We approximate the vorticity $\xi$ by the mollified one $\xi^\delta$ and we use the mean field result for the vorticity $\xi^\delta$ proved in \cite{BCF-2017} and we combine the two pieces.  In what follows, $H^s\left(  \mathbb{R}^{3}: \mathbb{R}^3\right) $  is the Sobolev space of vector valued functions, we denote by $\|\cdot\|_{H^s}$ the associated norm. In particular for $s=0$, $H^0\left(  \mathbb{R}^{3}, \mathbb{R}^3\right) $ reduces to the Lebesgue space $L^2\left(  \mathbb{R}^{3}, \mathbb{R}^3\right) $.  
\subsection{Local posedness for the Euler equations}

It is known that \eqref{euler} has local (in time) classical solutions:

\begin{theorem} Let $v_{0}\in H^m\left(  \mathbb{R}^{3}, \mathbb{R}^{3}\right) $ with $m>5/2$ such that ${\rm div}\ v_{0}=0$. Then, there exists $T_{0}>0$ and there exists a unique $v\in C([0,T]; H^m\left(  \mathbb{R}^{3}, \mathbb{R}^{3}\right) )$ solution of \eqref{euler}. 
\end{theorem}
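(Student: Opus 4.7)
The plan is the classical Kato--Lichtenstein local well-posedness argument for 3D Euler, carried out via mollification and a closed $H^m$ a priori estimate. First I would introduce a family of regularized problems: with $J_\varepsilon$ a Friedrichs mollifier and $\mathbb{P}$ the Leray--Helmholtz projector, solve
\[
\partial_t v^\varepsilon + \mathbb{P}\,J_\varepsilon\!\bigl((J_\varepsilon v^\varepsilon)\cdot\nabla\,J_\varepsilon v^\varepsilon\bigr)=0,\qquad v^\varepsilon(0)=J_\varepsilon v_0.
\]
Since $J_\varepsilon$ smooths arbitrarily and the resulting nonlinearity is locally Lipschitz on the divergence-free subspace of $H^m$ for $m>5/2$, this is an ODE in a Banach space and Cauchy--Lipschitz provides a unique smooth solution on a maximal interval.

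The key step is a uniform-in-$\varepsilon$ $H^m$ estimate. Applying $D^\alpha$ for $|\alpha|\leq m$, pairing with $D^\alpha v^\varepsilon$ in $L^2$ (the top-order transport term cancels thanks to $\operatorname{div} v^\varepsilon=0$), and invoking the Kato--Ponce commutator inequality, one obtains
\[
\frac{d}{dt}\|v^\varepsilon\|_{H^m}^2 \leq C\,\|\nabla v^\varepsilon\|_{L^\infty}\,\|v^\varepsilon\|_{H^m}^2.
\]
Because $m>5/2$, the Sobolev embedding $H^{m-1}\hookrightarrow L^\infty$ gives $\|\nabla v^\varepsilon\|_{L^\infty}\leq C\|v^\varepsilon\|_{H^m}$, so the inequality is cubic in $\|v^\varepsilon\|_{H^m}$, and comparison with $y'=Cy^{3/2}$ furnishes a lifespan $T_0=T_0(\|v_0\|_{H^m})>0$ independent of $\varepsilon$ together with a uniform bound $M:=\sup_\varepsilon\sup_{[0,T_0]}\|v^\varepsilon(t)\|_{H^m}<\infty$.

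To pass to the limit I would estimate the difference $w^{\varepsilon,\varepsilon'}=v^\varepsilon-v^{\varepsilon'}$ in $L^2$; the uniform $L^\infty$ bound on $\nabla v^\varepsilon$ from Step~2 lets Gronwall close the argument, showing that $\{v^\varepsilon\}$ is Cauchy in $C([0,T_0];L^2)$. Interpolating with the uniform $H^m$ bound yields convergence in $C([0,T_0];H^{m'})$ for every $m'<m$, hence (choosing $m'$ close enough to $m$ that $m'-1>3/2$) convergence in $C([0,T_0];C^1)$, which is more than enough to pass to the limit in every term of \eqref{euler}. The limit $v$ lies in $L^\infty(0,T_0;H^m)\cap C([0,T_0];H^{m'})$ and weak-$*$ lower semicontinuity preserves $\|v(t)\|_{H^m}\leq M$. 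Uniqueness follows from exactly the same $L^2$ difference estimate applied to two $H^m$ solutions with the same initial datum.

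The main obstacle is upgrading continuity from $H^{m'}$ ($m'<m$) to strong continuity in the top norm $H^m$. The standard device is the Bona--Smith trick: regularize only the initial datum, use the fact that $J_\varepsilon v_0\to v_0$ in $H^m$ while enjoying higher-regularity bounds of order $\varepsilon^{-k}$ in $H^{m+k}$, and match these growth rates against the stability of the flow in the weaker norm to obtain strong $H^m$-continuity at $t=0$, and then at every subsequent time by the flow property. Everything else (the commutator estimate, the local ODE in $H^m$, Gronwall) is standard once the closed $H^m$ estimate above is in hand.
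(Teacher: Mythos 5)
Your proposal is correct and is essentially the same approach as the paper's: the paper itself defers the full construction to its cited references (Bourguignon--Brezis and Majda--Bertozzi, which carry out exactly the mollification/uniform-estimate/limit scheme you describe) and only presents the key closed energy estimate via the Kato--Ponce commutator inequality and a Riccati-type comparison giving a lifespan depending only on the initial norm. The sole cosmetic difference is that the paper runs this estimate on the vorticity in $H^{m-1}$ (as in the proof of Theorem \ref{sobolev solutions euler}) rather than on the velocity in $H^{m}$, which is equivalent via the Biot--Savart law.
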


The proof of this theorem can be performed through a fixed point argument and can be found for example in
\cite{Bourguignon} and also in \cite{BerMaj}. 
We will show here only how to get the estimate of the vorticity field in $H^{m-1}\left(  \mathbb{R}^{3}, \mathbb{R}^{3}\right) $. 
We define the linear operator $\Lambda=(-\Delta)^{1/2}$ and its powers $\Lambda^s$.
Hence $\Lambda^2=-\Delta$.
Note, in particular that $\Lambda^{s} $ maps 
$H^{r}\left(  \mathbb{R}^{3}, \mathbb{R}^{3}\right) $ into $H^{r-s}\left(  \mathbb{R}^{3}, \mathbb{R}^{3}\right) $.

For any $u,v,w\in H^1\left(  \mathbb{R}^{3}, \mathbb{R}^{3}\right) $ with $\nabla\cdot u=0$ we have
\begin{equation}\label{tril1}
\langle u \cdot \nabla v, w\rangle =-\langle u \cdot \nabla w, v \rangle, \qquad \langle u \cdot\nabla v, v\rangle =0.
\end{equation}
Define the commutator 
\[
[\Lambda^s,f]g= \Lambda^s(fg) - f \ \Lambda^s g .
\]
From \cite{KatoPonce} we have the following two lemmas.

\begin{lemma}[Commutator lemma]\label{comm}
Let $s>0$, $1<p<\infty$  and $p_2,p_3 \in (1,\infty)$ be such that
\[
\frac 1p\ge  \frac 1{p_1}+\frac 1{p_2},\qquad
\frac 1p\ge \frac 1{p_3}+\frac 1{p_4}.
\]
Then
\[
\|[\Lambda^s,f]g\|_{L_p}\le C\left(
\|\nabla f\|_{L_{p_1}}\|\Lambda^{s-1}g\|_{L_{p_2}}+
\|\Lambda^s f\|_{L_{p_3}}\|g\|_{L_{p_4}}
\right) .
\]
\end{lemma}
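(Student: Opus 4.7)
The inequality is the classical Kato--Ponce commutator estimate, and I would prove it via Littlewood--Paley and Bony paraproduct calculus. Fix a dyadic partition $\mathrm{Id}=\sum_j\Delta_j$ with low-frequency cutoffs $S_j=\sum_{k<j}\Delta_k$, and decompose the product through Bony's formula $fg = T_f g + T_g f + R(f,g)$, where $T_f g = \sum_j S_{j-1}f\,\Delta_j g$ is the paraproduct and $R$ the resonance term. Substituting into the commutator and moving $\Lambda^s$ past the appropriate pieces yields
\[
[\Lambda^s,f]g \;=\; [\Lambda^s,T_f]g \;+\; \bigl(\Lambda^s(T_g f) - T_{\Lambda^s g}f\bigr) \;+\; \bigl(\Lambda^s R(f,g) - R(f,\Lambda^s g)\bigr),
\]
three pieces I would estimate separately by dyadic Hölder and sum using the Littlewood--Paley square-function characterization of $L^p$ for $1<p<\infty$.

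The heart of the argument is the true commutator $[\Lambda^s,T_f]g = \sum_j [\Lambda^s,S_{j-1}f]\Delta_j g$. A naive bound would use $\|\Lambda^s\|\sim 2^{js}$ on the frequency-$2^j$ block of $\Delta_j g$, yielding the wrong scaling. The point is that on the paraproduct region $|\eta|\ll|\xi|$ the symbol obeys $|\xi+\eta|^s-|\xi|^s = O(|\eta|\,|\xi|^{s-1})$; realizing $[\Lambda^s,S_{j-1}f]$ as convolution against a kernel and applying the mean-value theorem, one derivative falls on $f$ while the remaining $\Lambda^{s-1}$ acts on $\Delta_j g$. This gives a pointwise square-function bound controlled by the Hardy--Littlewood maximal function of $\nabla f$ times $\bigl(\sum_j 2^{2j(s-1)}|\Delta_j g|^2\bigr)^{1/2}$, and Hölder with exponents $(p_1,p_2)$ followed by Fefferman--Stein and Littlewood--Paley produces the contribution $C\|\nabla f\|_{L^{p_1}}\|\Lambda^{s-1}g\|_{L^{p_2}}$.

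For the remaining pieces the dyadic supports force $\Lambda^s$ to land on the $f$-blocks, so one directly estimates $\Lambda^s(T_g f)$, $\Lambda^s R(f,g)$, and their companions $T_{\Lambda^s g}f$, $R(f,\Lambda^s g)$ by $C\|\Lambda^s f\|_{L^{p_3}}\|g\|_{L^{p_4}}$ by Bernstein on the low-frequency $g$-factor and Hölder in the dyadic sum. Summing the three contributions gives the claimed inequality; the exponent hypotheses $1/p\ge 1/p_1+1/p_2$ and $1/p\ge 1/p_3+1/p_4$ are precisely what Hölder consumes at the final step.

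The main obstacle is the first term: extracting only one derivative onto $f$ (rather than all $s$) from $[\Lambda^s,S_{j-1}f]$ via symbol cancellation is what makes the estimate sharp and requires careful kernel analysis. Everything else reduces, by paraproduct bookkeeping, to standard Littlewood--Paley inequalities in $L^p$, $1<p<\infty$.
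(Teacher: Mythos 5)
The paper does not prove this lemma at all: it is quoted verbatim from Kato and Ponce \cite{KatoPonce}, so there is no in-paper argument to compare against. Your sketch is a correct outline of the now-standard Littlewood--Paley proof of that estimate: the Bony splitting of $[\Lambda^s,f]g$ into the true commutator $[\Lambda^s,T_f]g$, the pair $\Lambda^s(T_gf)-T_{\Lambda^sg}f$, and the resonance pair is the right decomposition; the symbol cancellation $|\xi+\eta|^s-|\xi|^s=O(|\eta|\,|\xi|^{s-1})$ on the paraproduct region is indeed the mechanism that converts one full derivative on $g$ into $\nabla f$ times $\Lambda^{s-1}g$; and the remaining pieces do reduce, via $s>0$ to sum the geometric factors $2^{(k-j)s}$, to the $\|\Lambda^sf\|_{L^{p_3}}\|g\|_{L^{p_4}}$ term. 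This differs from Kato and Ponce's original argument, which expands the symbol of $\Lambda^s$ to first order and invokes the Coifman--Meyer multilinear multiplier theorem rather than running the dyadic analysis by hand; the paraproduct route is more self-contained and makes the single-derivative gain transparent, at the cost of the kernel/mean-value analysis of $[\Lambda^s,S_{j-1}f]\Delta_jg$ that you assert but do not carry out (that step, together with the Fefferman--Stein vector-valued maximal inequality needed to close the square-function bound when $p_1<\infty$, is where all the real work sits). One caveat on the statement rather than the proof: H\"older at the final step produces the estimate when $\frac1p=\frac1{p_1}+\frac1{p_2}$ and $\frac1p=\frac1{p_3}+\frac1{p_4}$; on all of $\mathbb{R}^3$ there is no inclusion between Lebesgue spaces, so the inequalities ``$\ge$'' in the hypothesis should be read as equalities (as they are in the paper's only application, with $p=p_2=2$, $p_1=\infty$, $p_3=6$, $p_4=3$), and your proof correctly consumes them as such.
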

and 

\begin{lemma}\label{prod}
Let $s>\frac{3}{2}$ then $H^{s}(\mathbb{R}^3, \mathbb{R}^{3})$ is a Banach algebra, that is there is a $c>0$ such that for all 
$f, g\in H^{s}(\mathbb{R}^3)$
\[
\|\Lambda^s(fg)\|_{H^0} \le C \|f\|_{H^s} \| g\|_{H^s}
\]
\end{lemma}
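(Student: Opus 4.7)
The plan is to reduce the estimate to Fourier-side manipulations. By Plancherel, $\|\Lambda^s(fg)\|_{L^2} = \bigl\||\cdot|^s \widehat{fg}\bigr\|_{L^2}$, and since $\widehat{fg}$ is (up to a normalization constant) the convolution $\hat f * \hat g$, I would use the elementary bound
\[
|\xi|^s \le C_s\bigl(|\xi-\eta|^s + |\eta|^s\bigr)
\]
valid for all $s \ge 0$ and $\xi,\eta \in \mathbb{R}^3$ (a straightforward consequence of $|\xi|\le|\xi-\eta|+|\eta|$ and convexity). Substituting this into $|\xi|^s|\widehat{fg}(\xi)|$ splits the quantity to be estimated into two convolution integrals, one where the weight $|\cdot|^s$ lands on $\hat f$ and one where it lands on $\hat g$.

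Next I would apply Young's convolution inequality $\|u*v\|_{L^2} \le \|u\|_{L^2}\|v\|_{L^1}$ to each of the two pieces. The first piece is then bounded by $\||\cdot|^s \hat f\|_{L^2}\,\|\hat g\|_{L^1} = \|\Lambda^s f\|_{L^2}\,\|\hat g\|_{L^1}$, and the second, by symmetry, by $\|\hat f\|_{L^1}\,\|\Lambda^s g\|_{L^2}$. The homogeneous Sobolev factors are trivially controlled by the full norms $\|f\|_{H^s}$ and $\|g\|_{H^s}$, so what remains is to bound $\|\hat f\|_{L^1}$ (and symmetrically $\|\hat g\|_{L^1}$) by $\|f\|_{H^s}$.

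This final step is the one place where the assumption $s > 3/2$ enters in an essential way, and it is the main point of the argument; the key estimate I would establish is the embedding $H^s(\mathbb{R}^3)\hookrightarrow \mathcal{F}L^1$. By Cauchy--Schwarz,
\[
\|\hat f\|_{L^1} = \int_{\mathbb{R}^3} (1+|\xi|^2)^{-s/2}\,(1+|\xi|^2)^{s/2}|\hat f(\xi)|\,d\xi \le \left(\int_{\mathbb{R}^3}(1+|\xi|^2)^{-s}\,d\xi\right)^{1/2} \|f\|_{H^s},
\]
and the weight integral converges precisely when $2s > 3$, i.e.\ exactly under the hypothesis of the lemma. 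Putting all pieces together yields $\|\Lambda^s(fg)\|_{L^2} \le C \|f\|_{H^s}\|g\|_{H^s}$, which is the claimed inequality. Beyond the (standard) weight-integral computation, I do not foresee any serious obstacle; the entire argument is classical Fourier analysis and would go through verbatim for $\mathbb{R}^d$ with the threshold $s > d/2$.
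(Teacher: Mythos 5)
Your argument is correct and complete. Note that the paper itself offers no proof of this lemma: it is quoted directly from Kato--Ponce \cite{KatoPonce} alongside the commutator estimate, so there is no internal argument to compare against. What you give is the classical self-contained Fourier-side proof of the algebra property: the splitting $|\xi|^s \le C_s(|\xi-\eta|^s+|\eta|^s)$ (valid here by convexity since $s>3/2>1$), Young's inequality $\|u*v\|_{L^2}\le\|u\|_{L^2}\|v\|_{L^1}$, and the embedding $H^s(\mathbb{R}^3)\hookrightarrow\mathcal{F}L^1$ via Cauchy--Schwarz, which is exactly where $2s>3$ is needed. Each step checks out, and the statement as written only requires control of the homogeneous piece $\|\Lambda^s(fg)\|_{L^2}$, which is precisely what you bound; if one wanted the full Banach-algebra norm inequality one would add the trivial estimate $\|fg\|_{L^2}\le\|f\|_{L^\infty}\|g\|_{L^2}\le C\|f\|_{H^s}\|g\|_{H^s}$, again using $s>3/2$. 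Your proof buys self-containedness and the generalization to $\mathbb{R}^d$ with threshold $s>d/2$ at essentially no cost, whereas the paper's citation buys brevity and bundles the lemma with the (harder) commutator estimate from the same source.
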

In particular for $u,v \in \mathbb R^3$,
\[
[\Lambda^s,u]\cdot \nabla v
=
\Lambda^s\big((u \cdot \nabla)v\big)
-(u \cdot \nabla) \Lambda^s v .
\]
Hence,
\begin{equation}\label{tril-con-comm2}
\begin{split}
\langle \Lambda^s\big((u \cdot \nabla)v \big), \Lambda^s v\rangle &=
\langle [\Lambda^s,u]  \cdot \nabla v, \Lambda^s v\rangle +
\underbrace{\langle (u \cdot \nabla)\Lambda^s v , \Lambda^s v\rangle }
_{=0\text { by } \eqref{tril1}}
\\
&=
\langle [\Lambda^s,u] \cdot \nabla  v, \Lambda^s v\rangle .
\end{split}\end{equation}

Now the estimate on the vorticity field in $H^{m-1}(\mathbb{R}^3, \mathbb{R}^3)$ can be proved as in the proof of Theorem \ref{sobolev solutions euler} below.
	
\subsection{The mollified 3D Euler equations}
We are interested in the 3D Euler equations in their vorticity formulation. 
Consider the vector field $\xi^{\delta}$ solution of

\begin{equation}\label{vorticity-delta}
\begin{cases}
\partial_t \xi^{\delta}+(v^{\delta} \cdot \nabla) \xi^{\delta} = (\xi^{\delta} \cdot \nabla) v^{\delta} \\
\nabla \cdot \xi^{\delta}=0,\\
\xi^{\delta}(0,x)=\xi_{0}^{\delta}(x)
\end{cases}
\end{equation}
where 
\begin{equation}\label{v-K}
v^{\delta}(t, x)=(K^{\delta}\ast \xi^{\delta})(t, x),
\end{equation}

\begin{theorem}\label{sobolev solutions euler} Let $\xi^{\delta}_{0}\in H^s\left(  \mathbb{R}^{3}, \mathbb{R}^{3}\right) $ with $s>3/2$. Then, there exists $T_{0}>0$, independent of $\delta$ and there exists a unique $\xi^{\delta}\in C([0,T]; H^s\left(  \mathbb{R}^{3}, \mathbb{R}^{3}\right) )$ solution of \eqref{vorticity-delta}. 
\end{theorem}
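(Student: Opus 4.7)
The plan is to combine standard $H^s$-energy estimates for 3D Euler-type vorticity equations with a uniform-in-$\delta$ Biot-Savart bound, so as to obtain a common lifespan $T_0$. For each fixed $\delta > 0$ the kernel $K^\delta$ is $C^\infty$ (in fact has compactly supported Fourier transform), so local existence and uniqueness of an $H^s$-valued solution follows from a standard approximation procedure — Friedrichs mollification, spectral truncation as in Majda-Bertozzi, or a Picard iteration directly in $H^s$ — which I take as a black box. The essential content of the theorem is therefore that the lifespan can be taken independent of $\delta$, which I will extract from an a priori $H^s$ estimate whose constants do not depend on $\delta$.

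For that estimate I apply $\Lambda^s$ to \eqref{vorticity-delta} and pair with $\Lambda^s \xi^\delta$ in $L^2$. For the transport term $(v^\delta\cdot\nabla)\xi^\delta$ I use the Kato-Ponce splitting
\[
\Lambda^s\bigl((v^\delta\cdot\nabla)\xi^\delta\bigr) = (v^\delta\cdot\nabla)\Lambda^s\xi^\delta + [\Lambda^s, v^\delta]\cdot\nabla\xi^\delta;
\]
the first summand contributes zero thanks to $\nabla\cdot v^\delta = 0$ (inherited from $\nabla\cdot K = 0$) and the identity \eqref{tril1}, exactly as in \eqref{tril-con-comm2}. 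The commutator is estimated via Lemma \ref{comm}, producing essentially $\|\nabla v^\delta\|_{L^\infty}\|\xi^\delta\|_{H^s}$ plus $\|\Lambda^s v^\delta\|_{L^2}\|\nabla \xi^\delta\|_{L^\infty}$. The stretching term $(\xi^\delta\cdot\nabla)v^\delta$ is handled directly by the Banach-algebra property (Lemma \ref{prod}), giving $C\|\xi^\delta\|_{H^s}\|\nabla v^\delta\|_{H^s}$. Every norm of $v^\delta$ is then absorbed into a norm of $\xi^\delta$ through the uniform Biot-Savart bound $\|v^\delta\|_{\dot H^{s+1}} \leq \|\xi^\delta\|_{\dot H^s}$, which holds with a constant independent of $\delta$ because in Fourier $\hat K^\delta(k) = \hat\rho(\delta k)\,\hat K(k)$ with $|\hat\rho(\delta k)|\leq 1$; the Sobolev embedding $H^s \hookrightarrow L^\infty$ (and, for the second commutator piece, $H^{s-1}\hookrightarrow L^\infty$) converts the $L^\infty$ norms into $H^s$ norms. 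The outcome is a Riccati-type inequality
\[
\frac{d}{dt}\|\xi^\delta\|_{H^s} \leq C\|\xi^\delta\|_{H^s}^2,\qquad C \text{ independent of } \delta.
\]

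Integrating this inequality yields $\|\xi^\delta(t)\|_{H^s} \leq \|\xi_0^\delta\|_{H^s}/(1 - Ct\|\xi_0^\delta\|_{H^s})$, which stays finite on any $[0,T_0]$ with $CT_0\|\xi_0^\delta\|_{H^s} < 1$; this is the common time of existence. Uniqueness follows from an analogous energy estimate on the difference $\xi^\delta - \tilde\xi^\delta$ of two solutions in $L^2$ (or any weaker $H^r$), again using only the uniform Biot-Savart bound together with Gronwall; continuity in time with values in $H^s$ is inherited from the approximating sequence.

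The main obstacle is keeping the estimate $\delta$-uniform at every step. Pointwise bounds on $K^\delta$ and its derivatives blow up like $\delta^{-4-m}$ — this is the content of the constants $C_\delta^m$ of Section \ref{section the flow} — so working directly with $K^\delta$ in physical space would yield a lifespan that shrinks to zero with $\delta$. The crucial observation is that for the $H^s$ energy estimate one does not need pointwise bounds on $K^\delta$ at all: only the Fourier-multiplier nature of Biot-Savart together with the elementary bound $|\hat\rho(\delta k)|\leq 1$ matters, and this is what allows every constant in the energy estimate above to be taken independent of $\delta$.
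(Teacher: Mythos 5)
Your strategy is the same as the paper's: apply $\Lambda^s$, pair with $\Lambda^s\xi^\delta$, kill the pure transport contribution via \eqref{tril-con-comm2}, estimate the commutator with Lemma \ref{comm} and the stretching term with Lemma \ref{prod}, and obtain a $\delta$-independent Riccati inequality by exploiting that $K^\delta=\rho^\delta\ast K$ acts on Sobolev norms uniformly in $\delta$ (you phrase this as $|\hat\rho(\delta k)|\le 1$; the paper phrases it as equiboundedness of $g\mapsto\rho^\delta\ast g$ together with continuity of $\xi\mapsto K\ast\xi$ from $H^s$ to $H^{s+1}$, cf.\ \eqref{s plus one of v} --- these are the same fact). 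You also correctly identify the one point that matters, namely that the pointwise constants $C^m_\delta\approx\delta^{-4-m}$ must never enter the energy estimate.

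There is, however, one concrete defect: your choice of exponents in the second commutator term. You take $\|\Lambda^s v^\delta\|_{L^2}\|\nabla\xi^\delta\|_{L^\infty}$ and propose to close it with $H^{s-1}\hookrightarrow L^\infty$. In $\mathbb{R}^3$ that embedding requires $s-1>3/2$, i.e.\ $s>5/2$, whereas the theorem is claimed for all $s>3/2$ (and this weaker range is what the rest of the paper actually uses for the vorticity). The fix is exactly the paper's choice in Lemma \ref{comm}: take $p=p_2=2$, $p_1=\infty$, $p_3=6$, $p_4=3$, so the second term becomes $\|\Lambda^s v^\delta\|_{L^6}\|\nabla\xi^\delta\|_{L^3}$, controlled by $\|\Lambda^{s+1}v^\delta\|_{H^0}\|\Lambda^s\xi^\delta\|_{H^0}$ via $H^1\hookrightarrow L^6$ and $H^{1/2}\hookrightarrow L^3$, which only needs $s\ge 3/2$. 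With that substitution your argument coincides with the paper's; as written, it only proves the theorem for $s>5/2$.
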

\begin{proof} Here we only prove that there exist $T_0, C>0$, independent of $\delta$, such that $\sup_{0\leq t\leq T_0}\|\xi^{\delta}(t)\|_{H^s}\leq C$.

Let us apply the operator $\Lambda^{s}$ to \eqref{vorticity-delta} and then multiply by $\Lambda^{s} \xi^{\delta}(t)$. By using 
\eqref{tril-con-comm2} we obtain that

\begin{align*}
\frac{1}{2}\frac{d}{dt}\|\Lambda^s\xi^{\delta}(t)\|^2_{H^0}&\leq 
|\langle [\Lambda^s,v^{\delta}(t)]  \cdot \nabla \xi^{\delta}(t), \Lambda^s \xi^{\delta}(t)\rangle |+
|\langle \Lambda^s(\xi^{\delta}(t) \cdot \nabla v^{\delta}(t)), \Lambda^s \xi^{\delta}(t)\rangle |\\
&\leq  \|[\Lambda^s,v^{\delta}(t)]  \cdot \nabla \xi^{\delta}(t)\|_{H^0} \|\Lambda^s\xi^{\delta}(t)\|_{H^0}+
\|\Lambda^s(\xi^{\delta} (t)\cdot \nabla v^{\delta}(t))\|_{H^0} \|\Lambda^s\xi^{\delta}(t)\|_{H^0}
\end{align*}
By using Lemma \ref{comm} with $p=p_2=2$, $p_1=\infty$ and $p_3=6$ and $p_4=3$  and also Lemma \ref{prod}, we obtain
\begin{align*}
\frac{1}{2}\frac{d}{dt}\|\Lambda^s\xi^{\delta}(t)\|^2_{H^0} &\leq \left (\| \nabla v^{\delta}\|_{L^\infty} \|\Lambda^s\xi^{\delta}(t)\|_{H^0} + \|\Lambda^s v^{\delta}\|_{L^{6}}\|\Lambda \xi^{\delta}(t)\|_{L^3}\right)
 \|\Lambda^s\xi^{\delta}(t)\|_{H^0}\\
& +\|\Lambda^{s+1}v^{\delta}(t)\|_{H^0}\|\Lambda^s\xi^{\delta}(t)\|^2_{H^0}
 \end{align*}
Since $s>\frac{3}{2}$, $H^{s}\left( \mathbb{R}^{3}, \mathbb{R}^{3}\right) \subset L^\infty\left(  \mathbb{R}^{3}, \mathbb{R}^{3}\right) $, 
$H^1\left(  \mathbb{R}^{3}, \mathbb{R}^{3}\right) \subset L^6\left(  \mathbb{R}^{3}, \mathbb{R}^{3}\right) $ and 
$H^{1/2}\left(  \mathbb{R}^{3}, \mathbb{R}^{3}\right) \subset L^3\left(  \mathbb{R}^{3}, \mathbb{R}^{3}\right) $,
 then up to a constant that we omit, we have
$\|\Lambda \xi^{\delta}(t)\|_{L^3}\leq \left\Vert \Lambda\xi^{\delta}(t)\right\Vert _{H^{1/2}}\leq\left\Vert \Lambda^{s}%
\xi^{\delta}(t)\right\Vert _{0}$, and $\left\Vert \nabla v^{\delta}(t)\right\Vert
_{L^\infty}\leq\left\Vert \Lambda^{s+1}v^{\delta}(t)\right\Vert _{H^0}$; moreover,
$\left\Vert \Lambda^{s}v^{\delta}(t)\right\Vert _{L^6}\leq\left\Vert \Lambda
^{s+1}v^{\delta}(t)\right\Vert _{0}$. Hence the previous sum of terms is bounded
above (up to a constant) by%
\[
\frac{1}{2}\frac{d}{dt}\|\Lambda^s\xi^{\delta}(t)\|^2_{H^0}\leq C\left\Vert \Lambda^{s+1}v^{\delta}(t)\right\Vert _{H^0}\left\Vert \Lambda
^{s}\xi^{\delta}(t)\right\Vert _{H^0}^{2}.
\]
Now using the identity \eqref{v-K}, the definition of $K^\delta$, the properties of the convolution and the definition of $\rho$, we have
\begin{align}
\|\Lambda^{s+1}v^{\delta}(t)\|_{H^0}&=\|\Lambda^{s+1}(K^{\delta}\ast \xi^{\delta})(t)\|_{H^0}\nonumber\\
&=\|\Lambda^{s+1}(\rho^\delta\ast K\ast \xi^{\delta})(t)\|_{H^0}\nonumber\\
&\leq \|\rho^\delta\ast \Lambda^{s+1}(K\ast \xi^{\delta})(t)\|_{H^0}\label{s plus one of v}\\
&\leq  \|\Lambda^{s+1}(K\ast \xi^{\delta}(t))\|_{H^0}\nonumber\\
&\leq C\|\Lambda^{s}\xi^{\delta}(t)\|_{H^0}\nonumber
\end{align}
Let us mention that in the above inequality, we use that the linear map
\[
\xi\mapsto K\ast\xi
\]%
is continuous  from $H^{s}(\mathbb{R}^3, \mathbb{R}^3)$ into  $H^{s+1}(\mathbb{R}^3, \mathbb{R}^3)$ and that the linear map 
\[
g\mapsto\rho^{\delta}\ast g
\]
is equibounded  in $\delta$ from $H^{s+1}(\mathbb{R}^3, \mathbb{R}^3)$ into $H^{s+1}(\mathbb{R}^3, \mathbb{R}^3)$.

Plugging inequality \eqref{s plus one of v} in the previous estimate we get that

\begin{align*}
\frac{d}{dt}\|\Lambda^s\xi^{\delta}(t)\|^2_{H^0}&\leq C \|\Lambda^s\xi^{\delta}(t)\|^3_{H^0},
\end{align*}
which implies that 
\begin{align*}
\frac{d}{dt}\|\Lambda^s\xi^{\delta}(t)\|_{H^0}&\leq C\|\Lambda^s\xi^{\delta}(t)\|^2_{H^0}.
\end{align*}
then, after integrating

\begin{align*}
\frac{1}{\|\Lambda^s\xi^{\delta}(0)\|_{H^0}}-\frac{1}{\|\Lambda^s\xi^{\delta}(t)\|_{H^0}}\leq Ct
\end{align*}
Hence, if we assume that $T_{0}<\frac{1}{C\|\xi^{\delta}(0)\|_{H^s}}$, then we have the estimate

$$\sup_{0\leq t\leq T_{0}}\|\xi^{\delta}(t)\|_{H^s}^2\leq C. $$

\end{proof}

\subsection{Stability for the regularized solution}
Now we are able to state the convergence of $\xi^\delta$ to $\xi$. We have from the previous section that

\[\sup_{t\in\left[  0,T_{0}\right]  }\left\Vert \xi^{\delta}\left(  t\right)
\right\Vert _{H^{s}}\leq C
\]

for every $\delta\in\lbrack0,1)$.


\begin{theorem} \label{delta-cv} If  $\xi^{\delta}(0)  \rightarrow\xi (0)  $ in
$L^{2}\left(  \mathbb{R}^{3}, \mathbb{R}^3\right)  $ then $\xi^{\delta}\rightarrow\xi$
in $C\left(  \left[  0,T_{0}\right]  ;L^{2}\left(  \mathbb{R}^{3}, \mathbb{R}^3\right)
\right) $. 
\end{theorem}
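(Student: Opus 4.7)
The natural approach is an $L^2$ energy estimate on $w^\delta := \xi^\delta - \xi$, exploiting the uniform $H^s$ bounds on $\xi^\delta$ from Theorem \ref{sobolev solutions euler} and the analogous bound on $\xi$. Setting $u^\delta := v^\delta - v$, I subtract \eqref{vorticity} from \eqref{vorticity-delta} to get
\[
\partial_t w^\delta + (v^\delta \cdot \nabla)w^\delta = -(u^\delta \cdot \nabla)\xi + (w^\delta \cdot \nabla)v^\delta + (\xi \cdot \nabla)u^\delta.
\]
Pairing in $L^2$ with $w^\delta$, the transport term drops by $\nabla\cdot v^\delta = 0$, so
\[
\frac{1}{2}\frac{d}{dt}\|w^\delta\|_{L^2}^2 = -\langle u^\delta \cdot \nabla \xi,\, w^\delta\rangle + \langle w^\delta \cdot \nabla v^\delta,\, w^\delta\rangle + \langle \xi \cdot \nabla u^\delta,\, w^\delta\rangle =: I_1 + I_2 + I_3.
\]
The easy term is $|I_2| \leq \|\nabla v^\delta\|_{L^\infty}\|w^\delta\|_{L^2}^2 \leq C\|w^\delta\|_{L^2}^2$, where $C$ depends on $\sup_{[0,T_0]}\|\xi^\delta\|_{H^s}$ via the fact that $K^\delta\ast$ gains one derivative in the Sobolev scale uniformly in $\delta$ (already used in the proof of Theorem \ref{sobolev solutions euler}).

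The crux is to estimate $u^\delta$ by a decomposition that is \emph{uniform in $\delta$}. Writing $K^\delta = \rho^\delta \ast K$, I split
\[
u^\delta = K^\delta \ast w^\delta + (\rho^\delta \ast v - v).
\]
Since $|\hat\rho(\delta\eta)| \leq \|\hat\rho\|_{\infty}$, the Fourier symbol of $K^\delta$ is majorized by $C/|\eta|$ uniformly in $\delta$, hence Hardy--Littlewood--Sobolev (for $L^6$) and Calderón--Zygmund (for the gradient) give
\[
\|K^\delta \ast w^\delta\|_{L^6} + \|\nabla(K^\delta \ast w^\delta)\|_{L^2} \leq C\|w^\delta\|_{L^2}
\]
with $C$ independent of $\delta$. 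Standard mollifier estimates (combined with $v = K\ast\xi \in H^{s+1}$) yield
\[
\|\rho^\delta \ast v - v\|_{L^6} + \|\nabla(\rho^\delta \ast v - v)\|_{L^2} \leq C\delta\|v\|_{H^2} \leq C\delta\|\xi\|_{H^1},
\]
so altogether $\|u^\delta\|_{L^6} + \|\nabla u^\delta\|_{L^2} \leq C(\|w^\delta\|_{L^2} + \delta)$. Now Hölder's inequality with $1/6+1/3+1/2=1$ together with the Sobolev embedding $H^{1/2} \hookrightarrow L^3$ produces
\[
|I_1| \leq \|u^\delta\|_{L^6}\|\nabla \xi\|_{L^3}\|w^\delta\|_{L^2} \leq C\bigl(\|w^\delta\|_{L^2} + \delta\bigr)\|w^\delta\|_{L^2},
\]
and analogously $|I_3| \leq \|\xi\|_{L^\infty}\|\nabla u^\delta\|_{L^2}\|w^\delta\|_{L^2} \leq C(\|w^\delta\|_{L^2} + \delta)\|w^\delta\|_{L^2}$.

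Summing and using Young's inequality on the cross term $\delta\|w^\delta\|_{L^2}$ gives
\[
\frac{d}{dt}\|w^\delta\|_{L^2}^2 \leq C\|w^\delta\|_{L^2}^2 + C\delta^2,
\]
whence Grönwall delivers $\sup_{t\in[0,T_0]}\|w^\delta(t)\|_{L^2}^2 \leq e^{CT_0}\bigl(\|\xi^\delta(0) - \xi(0)\|_{L^2}^2 + C\delta^2 T_0\bigr)$, which tends to $0$ as $\delta \to 0$ by hypothesis. The main obstacle I foresee is the singularity of the Biot--Savart kernel: the naive $L^2 \to L^2$ bound on $K^\delta\ast$ is not uniform in $\delta$, so I must pay one unit of regularity or integrability to extract a uniform-in-$\delta$ constant. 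This is precisely what forces the Hölder split $L^6 \cdot L^3 \cdot L^2$ in the treatment of $I_1$ and the $L^\infty \cdot L^2 \cdot L^2$ split (controlling $\nabla u^\delta$ rather than $u^\delta$ itself) in the treatment of $I_3$.
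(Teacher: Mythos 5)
Your proposal is correct and follows essentially the same route as the paper: the same $L^2$ energy estimate on $\xi^\delta-\xi$, the same decomposition $v^\delta-v=K^\delta\ast(\xi^\delta-\xi)+(\rho^\delta\ast v-v)$ with the uniform-in-$\delta$ gain of one derivative for $K^\delta\ast$, the same H\"older splits $L^6\cdot L^3\cdot L^2$ and $L^\infty\cdot L^2\cdot L^2$, and Gr\"onwall. The only (harmless) differences are cosmetic: you distribute the nonlinear terms slightly differently and supply a quantitative $O(\delta)$ rate for the mollifier error where the paper only invokes $\|\rho^\delta\ast v-v\|_{H^1}\to 0$.
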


\begin{proof}
We have that
\[
\partial_{t}\left(  \xi^{\delta}-\xi\right)  +\left(  v^{\delta}%
\cdot\nabla\right)  \xi^{\delta}-\left(  v\cdot\nabla\right)  \xi=\left(
\xi^{\delta}\cdot\nabla\right)  v^{\delta}-\left(  \xi\cdot\nabla\right)
v
\]
hence%
\[
\partial_{t}\left(  \xi^{\delta}-\xi\right)  +\left(  v^{\delta}%
\cdot\nabla\right)  \left(  \xi^{\delta}-\xi\right)  +\left(  \left(
v^{\delta}-v\right)  \cdot\nabla\right)  \xi=\left(  \xi^{\delta}%
\cdot\nabla\right)  \left(  v^{\delta}-v\right)  +\left(  \left(
\xi^{\delta}-\xi\right)  \cdot\nabla\right)  v.
\]
This implies, using \eqref{tril1}, that
\begin{align*}
\frac{1}{2}\frac{d}{dt}\left\Vert \xi^{\delta}-\xi\right\Vert _{L^{2}\left(
\mathbb{R}^{3}\right)  }^{2}   = & -\int\left(  \left(  v^{\delta
}-v\right)  \cdot\nabla\right)  \xi\cdot\left(  \xi^{\delta}-\xi\right)
dx\\
& +\int\left(  \xi^{\delta}\cdot\nabla\right)  \left(  v^{\delta
}-v\right)  \cdot\left(  \xi^{\delta}-\xi\right)  dx\\
& +\int\left(  \left(  \xi^{\delta}-\xi\right)  \cdot\nabla\right)
v\cdot\left(  \xi^{\delta}-\xi\right)  dx\\
\leq&\left\Vert D\xi\right\Vert _{L^{3}}\left\Vert v^{\delta}-v\right\Vert
_{L^{6}}\left\Vert \xi^{\delta}-\xi\right\Vert _{L^{2}}\\
& +\left\Vert \xi^{\delta}\right\Vert _{\infty}\left\Vert D\left(
v^{\delta}-v\right)  \right\Vert _{L^{2}}\left\Vert \xi^{\delta}%
-\xi\right\Vert _{L^{2}}\\
& +\left\Vert Dv\right\Vert _{\infty}\left\Vert \xi^{\delta}-\xi\right\Vert
_{L^{2}}^{2}.
\end{align*}
Since $H^{s}\left(  \mathbb{R}^{3}, \mathbb{R}^3 \right)  \subset W^{1,3}\left(
\mathbb{R}^{3}, \mathbb{R}^3\right)  $
\[
\left\Vert D\xi\right\Vert _{L^{3}}\leq C\left\Vert \xi\right\Vert _{H^{s}%
}\leq C.
\]
Moreover, since $H^{1}\left(  \mathbb{R}^{3}, \mathbb{R}^3\right)  \subset L^{6}\left(
\mathbb{R}^{3}, \mathbb{R}^3\right)  $
\begin{align*}
\left\Vert v^{\delta}-v\right\Vert _{L^{6}}  & \leq\left\Vert K^{\delta
}\ast\left(  \xi^{\delta}-\xi\right)  \right\Vert _{H^{1}}+\left\Vert
K^{\delta}\ast\xi-K\ast\xi\right\Vert _{H^{1}}\\
& =\left\Vert \rho^{\delta}\ast K\ast\left(  \xi^{\delta}-\xi\right)
\right\Vert _{H^{1}}+\left\Vert \rho^{\delta}\ast v-v\right\Vert _{H^{1}%
}\\
& \leq C\left\Vert K\ast\left(  \xi^{\delta}-\xi\right)  \right\Vert
_{H^{1}}+\left\Vert \rho^{\delta}\ast v-v\right\Vert _{H^{1}}\\
& \leq C\left\Vert \xi^{\delta}-\xi\right\Vert _{L^{2}}+\left\Vert
\rho^{\delta}\ast v-v\right\Vert _{H^{1}}%
\end{align*}%

In the same way we can estimate the term with the first order derivative,

\begin{align*}
\left\Vert D\left(  v^{\delta}-v\right)  \right\Vert _{L^{2}}  &
\leq\left\Vert DK^{\delta}\ast\left(  \xi^{\delta}-\xi\right)  \right\Vert
_{L^{2}}+\left\Vert DK^{\delta}\ast\xi-DK\ast\xi\right\Vert _{L^{2}}\\
& =\left\Vert \rho^{\delta}\ast DK\ast\left(  \xi^{\delta}-\xi\right)
\right\Vert _{L^{2}}+\left\Vert \rho^{\delta}\ast Dv-Dv\right\Vert
_{L^{2}}\\
& \leq C\left\Vert DK\ast\left(  \xi^{\delta}-\xi\right)  \right\Vert
_{L^{2}}+\left\Vert \rho^{\delta}\ast Dv-Dv\right\Vert _{L^{2}}\\
& \leq C\left\Vert \xi^{\delta}-\xi\right\Vert _{L^{2}}+\left\Vert
\rho^{\delta}\ast Dv-Dv\right\Vert _{L^{2}}.
\end{align*}
Now, using  $\left\Vert \xi^{\delta}\right\Vert
_{L^\infty}\leq C \left\Vert \xi^{\delta}\right\Vert
_{H^{s}}\leq C$, and 
$\left\Vert Dv\right\Vert _{L^\infty}\leq C$, we get that

\begin{align*}
\frac{1}{2}\frac{d}{dt}\left\Vert \xi^{\delta}-\xi\right\Vert _{L^{2}}^{2}\leq &
 C\left(  \left\Vert \xi^{\delta}-\xi\right\Vert _{L^{2}}+\left\Vert
\rho^{\delta}\ast v-v\right\Vert _{H^{1}}\right)  \left\Vert \xi
^{\delta}-\xi\right\Vert _{L^{2}}\\
& +C\left(  C\left\Vert \xi^{\delta}-\xi\right\Vert _{L^{2}}+\left\Vert
\rho^{\delta}\ast Dv-Dv\right\Vert _{L^{2}}\right)  \left\Vert
\xi^{\delta}-\xi\right\Vert _{L^{2}}\\
& +C\left\Vert \xi^{\delta}-\xi\right\Vert _{L^{2}}^{2}\\
\leq & C\left\Vert \xi^{\delta}-\xi\right\Vert _{L^{2}}^{2}+C\left\Vert
\rho^{\delta}\ast v-v\right\Vert _{H^{1}}^{2}+C\left\Vert \rho
^{\delta}\ast Dv-Dv\right\Vert _{L^{2}}^{2}.
\end{align*}%

Using Gronwall's lemma and the fact that $\left\Vert
\rho^{\delta}\ast v-v\right\Vert _{H^{1}}^{2}\rightarrow0$, $\left\Vert
\rho^{\delta}\ast Dv-Dv\right\Vert _{L^{2}}^{2}\rightarrow0$ uniformly in $t$ completes the proof.
\end{proof}

\begin{lemma}
\label{lemma inequality}Given $\delta,R>0$ there exists a constant
$C_{\delta,R}>0$ with the following property. If $\xi_{0},\widetilde{\xi}_{0}$
satisfy $\left\vert \xi_{0}\right\vert_{\mathcal{M}} \leq R$, $\left\vert \widetilde{\xi
}_{0}\right\vert_{\mathcal{M}} \leq R$, and $\xi_{t}^{\delta},\widetilde{\xi}_{t}^{\delta}$
are the corresponding solutions of equation \eqref{Kvorticity} on $\left[  0,T\right]  $,
then, globally in time we have
\begin{equation*}
\sup_{t\in\left[  0,T\right]  }\left\Vert \xi_{t}^{\delta}-\widetilde{\xi}%
_{t}^{\delta}\right\Vert \leq C_{\delta,R}\left\Vert \xi_{0}-\widetilde{\xi
}_{0}\right\Vert .\label{stima}%
\end{equation*}

\end{lemma}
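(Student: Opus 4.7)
The plan is to work entirely in the Lagrangian representation of the mollified equation, use the push-forward identity $\xi_{t}^{\delta}=\varphi_{\sharp}^{t,K^{\delta}\ast\xi^{\delta}}\xi_{0}$ (and the analogous one for $\widetilde{\xi}_t^{\delta}$), and then apply the detailed flow estimates already assembled in Lemma \ref{flow}, together with the global-in-time bound $\|\xi_{t}^{\delta}\|\le|\xi_{0}|_{\mathcal{M}}\exp(T\|K^{\delta}\ast\xi_{0}\|_{L^{2}})$ from Theorem \ref{global}, which guarantees that all the constants in Lemma \ref{flow} remain finite on $[0,T]$ under the hypothesis $|\xi_{0}|_{\mathcal{M}},|\widetilde{\xi}_{0}|_{\mathcal{M}}\le R$.

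First, I would split
\[
\xi_{t}^{\delta}-\widetilde{\xi}_{t}^{\delta}
=\varphi_{\sharp}^{t,K^{\delta}\ast\xi^{\delta}}(\xi_{0}-\widetilde{\xi}_{0})
+\bigl(\varphi_{\sharp}^{t,K^{\delta}\ast\xi^{\delta}}-\varphi_{\sharp}^{t,K^{\delta}\ast\widetilde{\xi}^{\delta}}\bigr)\widetilde{\xi}_{0}
\]
and evaluate each piece against a test $\theta$ with $\|\theta\|_{\infty}+\mathrm{Lip}(\theta)\le 1$. For the first piece, the identity $(\varphi_{\sharp}^{t,K^{\delta}\ast\xi^{\delta}}(\xi_{0}-\widetilde{\xi}_{0}))(\theta)=(\xi_{0}-\widetilde{\xi}_{0})(\varphi_{\sharp}^{t,K^{\delta}\ast\xi^{\delta}}\theta)$ reduces the matter to estimating $\|\varphi_{\sharp}^{t,K^{\delta}\ast\xi^{\delta}}\theta\|_{\infty}+\mathrm{Lip}(\varphi_{\sharp}^{t,K^{\delta}\ast\xi^{\delta}}\theta)$. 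Writing $\varphi_{\sharp}^{t}\theta(x)=D\varphi^{t}(x)^{T}\theta(\varphi^{t}(x))$, this quantity is bounded by $\|D\varphi^{t}\|_{\infty}\|\theta\|_{\infty}+\mathrm{Lip}(D\varphi^{t})\|\theta\|_{\infty}+\|D\varphi^{t}\|_{\infty}^{2}\mathrm{Lip}(\theta)$, which by \eqref{Dphi} and \eqref{Dphi-Lip} is majorized by a constant $C_{\delta,R,T}$; hence the first piece contributes at most $C_{\delta,R,T}\|\xi_{0}-\widetilde{\xi}_{0}\|$.

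For the second piece I would bound it via the stronger $\mathcal{M}$-norm:
\[
\bigl|\widetilde{\xi}_{0}\bigl(\varphi_{\sharp}^{t,K^{\delta}\ast\xi^{\delta}}\theta-\varphi_{\sharp}^{t,K^{\delta}\ast\widetilde{\xi}^{\delta}}\theta\bigr)\bigr|
\le|\widetilde{\xi}_{0}|_{\mathcal{M}}\,\bigl\|\varphi_{\sharp}^{t,K^{\delta}\ast\xi^{\delta}}\theta-\varphi_{\sharp}^{t,K^{\delta}\ast\widetilde{\xi}^{\delta}}\theta\bigr\|_{\infty},
\]
which is exactly the role played by the $\mathcal{M}$-norm hypothesis on the initial data. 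Adding and subtracting the mixed term $D\varphi^{t,K^{\delta}\ast\xi^{\delta}}(x)^{T}\theta(\varphi^{t,K^{\delta}\ast\widetilde{\xi}^{\delta}}(x))$ and using $\mathrm{Lip}(\theta)\le 1$, the $L^{\infty}$ norm above is controlled by
\[
\|D\varphi^{t,K^{\delta}\ast\xi^{\delta}}\|_{\infty}\|\varphi^{t,K^{\delta}\ast\xi^{\delta}}-\varphi^{t,K^{\delta}\ast\widetilde{\xi}^{\delta}}\|_{\infty}
+\|D\varphi^{t,K^{\delta}\ast\xi^{\delta}}-D\varphi^{t,K^{\delta}\ast\widetilde{\xi}^{\delta}}\|_{\infty},
\]
and both terms are in turn estimated by \eqref{Lip-phi} and \eqref{Lip-Dphi} in the form $C_{\delta,R,T}\int_{0}^{t}\|\xi_{s}^{\delta}-\widetilde{\xi}_{s}^{\delta}\|\,ds$. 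Thus the second piece contributes at most $R\,C_{\delta,R,T}\int_{0}^{t}\|\xi_{s}^{\delta}-\widetilde{\xi}_{s}^{\delta}\|\,ds$.

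Putting the two estimates together yields an integral inequality of Gronwall type,
\[
\|\xi_{t}^{\delta}-\widetilde{\xi}_{t}^{\delta}\|\le C_{\delta,R,T}\|\xi_{0}-\widetilde{\xi}_{0}\|+C_{\delta,R,T}\int_{0}^{t}\|\xi_{s}^{\delta}-\widetilde{\xi}_{s}^{\delta}\|\,ds,
\]
and a direct application of Gronwall's lemma delivers the claimed bound with $C_{\delta,R}=C_{\delta,R,T}\exp(C_{\delta,R,T}T)$. The only genuine obstacle is the bookkeeping between the two distinct dual norms: the output must be in the weak metric $\|\cdot\|$, yet the second summand cannot be handled in that metric because $\varphi_{\sharp}^{t,K^{\delta}\ast\xi^{\delta}}\theta-\varphi_{\sharp}^{t,K^{\delta}\ast\widetilde{\xi}^{\delta}}\theta$ is not a priori uniformly Lipschitz—this is precisely why the hypothesis must be phrased in the stronger norm $|\cdot|_{\mathcal{M}}$, and once one accepts this the estimates of Lemma \ref{flow} supply everything needed.
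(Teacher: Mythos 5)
Your proposal is correct and follows essentially the same route as the paper's proof: the same decomposition into $\varphi_{\sharp}^{t,K^{\delta}\ast\xi^{\delta}}(\xi_{0}-\widetilde{\xi}_{0})$ plus $(\varphi_{\sharp}^{t,K^{\delta}\ast\xi^{\delta}}-\varphi_{\sharp}^{t,K^{\delta}\ast\widetilde{\xi}^{\delta}})\widetilde{\xi}_{0}$, the same use of the weak norm on the first piece and the $|\cdot|_{\mathcal{M}}$ norm on the second, the same flow estimates \eqref{Dphi}, \eqref{Dphi-Lip}, \eqref{Lip-phi}, \eqref{Lip-Dphi}, and Gronwall to conclude. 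Your remark on why the second piece forces the stronger $\mathcal{M}$-norm hypothesis is exactly the point the paper's argument relies on.
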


\begin{proof}

Let $t\in [0,T]$. We have%
\begin{align*}
\left\vert \xi^\delta_{t}\left(  \theta\right)  -\widetilde{\xi_{t}^{\delta}}\left(  \theta\right)
\right\vert  &  =\left\vert \varphi_{\sharp}^{t,K^\delta\ast \xi}\xi_{0}\left(
\theta\right)  -\varphi_{\sharp}^{t,K^\delta\ast \widetilde{\xi}}\widetilde{\xi_{0}}\left(
\theta\right)  \right\vert \\
&  \leq\left\vert \varphi_{\sharp}^{t,K^\delta\ast \xi}\xi_{0}\left(  \theta\right)
-\varphi_{\sharp}^{t,K^\delta\ast \xi}\widetilde{\xi_{0}}\left(  \theta\right)  \right\vert
+\left\vert \varphi_{\sharp}^{t,K^\delta\ast \xi}\widetilde{\xi_{0}}\left(  \theta\right)
-\varphi_{\sharp}^{t,K^\delta\ast \widetilde{\xi}}\widetilde{\xi_{0}}\left(  \theta\right)  \right\vert
\\
&  =\left\vert \left(  \xi_{0}-\widetilde{\xi_{0}}\right)  \left(  \varphi_{\sharp
}^{t,K^\delta\ast \xi}\theta\right)  \right\vert +\left\vert \widetilde{\xi_{0}}\left(
\varphi_{\sharp}^{t,K^\delta\ast \xi}\theta-\varphi_{\sharp}^{t,K^\delta\ast \widetilde{\xi}}%
\theta\right)  \right\vert \\
&  \leq\left\Vert \xi_{0}-\widetilde{\xi_{0}}\right\Vert \left(  \Vert\varphi_{\sharp
}^{t,K^\delta\ast \xi}\theta\Vert_{\infty}+\text{Lip}(\varphi_{\sharp}^{t,K^\delta\ast \xi}%
\theta)\right)  +\left\vert \widetilde{\xi_{0}}\right\vert _{\mathcal{M}}\Vert
\varphi_{\sharp}^{t,K^\delta\ast \xi}\theta-\varphi_{\sharp}^{t,K^\delta\ast \widetilde{\xi}}\theta
\Vert_{\infty}.
\end{align*}

Now, from the definition of push-forward and \eqref{Dphi} we infer that
\begin{align*}
\Vert\varphi_{\sharp}^{t,K^\delta\ast \xi}\theta\Vert_{\infty}&=\Vert D\varphi^{t,K^\delta\ast \xi}(\cdot)^{T}
\theta(\varphi^{t,K^\delta\ast \xi}(\cdot))\Vert_{\infty}\leq\Vert
D\varphi^{t,K^\delta\ast \xi}\Vert_{\infty}\Vert\theta\Vert_{\infty}\leq e^{C^1_{\delta}T\left\Vert \xi_0\right\Vert}\|\theta\|_{\infty}\\
\end{align*}%
On the other hand, 
\begin{align*}
&\left | D\varphi^{t,K^\delta\ast \xi}(x)^{T}\theta(\varphi^{t,K^\delta\ast \xi}(x))- D\varphi^{t,K^\delta\ast \xi }(y)^{T}
\theta(\varphi^{t,K^\delta\ast \xi}(y)) \right | \\
&\leq \left | D\varphi^{t,K^\delta\ast \xi}(x)^{T}\theta(\varphi^{t,K^\delta\ast \xi}(x))- D\varphi^{t,K^\delta\ast \xi }(y)^{T}\theta(\varphi^{t,K^\delta\ast \xi }(x)) \right |\\
&+\left | D\varphi^{t,K^\delta\ast \xi }(y)^{T}\theta(\varphi^{t,K^\delta\ast \xi}(x))- D\varphi^{t,K^\delta\ast \xi }(y)^{T}
\theta(\varphi^{t,K^\delta\ast \xi}(y)) \right |\\
&\leq \|\theta\|_{\infty}  \left | D\varphi^{t,K^\delta\ast \xi }(x)- D\varphi^{t,K^\delta\ast \xi }(y) \right |+\text {Lip}(\theta) 
\|D\varphi^{t,K^\delta\ast \xi}\|_{\infty}|x-y|
\end{align*}
Hence, using \eqref{Dphi} and  \eqref{Dphi-Lip} , we infer that
\begin{align*}
\text{Lip}(\varphi_{\sharp}^{t,K^\delta\ast \xi }\theta)\leq TC_{\delta}^2\|\xi_0\| e^{2C^1_{\delta}T\|\xi_0\|}\|\theta\|_{\infty} +e^{C^1_{\delta}T\|\xi_0\|}\text{Lip}(\theta)
\end{align*}
Moreover,%
\begin{align*}
\Vert\varphi_{\sharp}^{t,K^\delta\ast \xi}\theta-\varphi_{\sharp}^{t,K^\delta\ast \widetilde{\xi}}%
\theta\Vert_{\infty}  &  =\Vert D\varphi^{t,K^\delta\ast \xi }(\cdot)^{T}\theta
(\varphi^{t, K^\delta\ast \xi }(\cdot))-D\varphi^{t,B(\widetilde{\xi})}(\cdot)^{T}\theta
(\varphi^{t,K^\delta\ast \widetilde{\xi}}(\cdot))\Vert_{\infty}\\
&  \leq\Vert D\varphi^{t,K^\delta\ast \xi }(\cdot)^{T}\theta(\varphi^{t,K^\delta\ast \xi}%
(\cdot))-D\varphi^{t, K^\delta\ast \xi }(\cdot)^{T}\theta(\varphi^{t,K^\delta\ast \widetilde{\xi}}%
(\cdot))\Vert_{\infty}\\
&  +\Vert D\varphi^{t,K^\delta\ast \xi}(\cdot)^{T}\theta(\varphi^{t,K^\delta\ast \widetilde{\xi}}%
(\cdot))-D\varphi^{t,B(\widetilde{\xi})}(\cdot)^{T}\theta(\varphi^{t,K^\delta\ast \widetilde{\xi}}%
(\cdot))\Vert_{\infty}\\
  &  \leq\Vert D\varphi^{t,K^\delta\ast \xi}\Vert_{\infty}\text{Lip}(\theta)\Vert
\varphi^{t,K^\delta\ast \xi}-\varphi^{t,K^\delta\ast \widetilde{\xi}}\Vert_{\infty}+\Vert D\varphi
^{t,K^\delta\ast \xi}-D\varphi^{t,K^\delta\ast \widetilde{\xi}}\Vert_{\infty}\Vert\theta\Vert_{\infty}\\
\end{align*}

Using \eqref{Dphi},  \eqref{Lip-Dphi}  and \eqref{Lip-phi} we get that

\begin{align*}
\Vert\varphi_{\sharp}^{t,K^\delta\ast \xi }\theta-\varphi_{\sharp}^{t,K^\delta\ast \widetilde{\xi}}%
\theta\Vert_{\infty}& \leq   \text{Lip}(\theta) C^0_{\delta}C^1_{\delta} \|\xi_0\| 
e^{2T C^1_{\delta} \Vert \xi_0\Vert}\int_{0}^{T}\left\Vert \xi_s-\widetilde{\xi}_s\right\Vert ds\\
&+\Vert\theta\Vert_{\infty}
C^2_{\delta}\|\xi_0\| e^{C^1_{\delta}T  (\left\Vert \xi_0\right\Vert
+\left\Vert \widetilde{\xi}_0\right\Vert )}\left(  1+C^0_K C^1_{\delta}T \left\Vert
\xi_0\right\Vert e^{C^1_{\delta}T\left\Vert \xi_0\right\Vert }\right)
\int_{0}^{T}\Vert\xi_s-\widetilde{\xi_s}\Vert ds
\end{align*}

Collecting all these estimates, we get that

\begin{align*}
\|\xi^\delta_{t}-\widetilde{\xi^\delta_{t}}\|&\leq \|\xi_{0}-\widetilde{\xi_{0}}\|
\left(TC_{\delta}^2\|\xi_0\| +2\right)e^{2C^1_{\delta}T\|\xi_0\|} 
+\left(\left\vert \widetilde{\xi_{0}}\right\vert _{\mathcal{M}}\|\xi_0\|\int_{0}^{T}\Vert\xi^\delta_s-\widetilde{\xi^\delta_s}\Vert ds\right)\times\\
&\left( C^0_{\delta}C^1_{\delta} e^{2T C^1_{\delta} \Vert \xi_0\Vert} + C^2_{\delta}e^{C^1_{\delta}T  (\left\Vert \xi_0\right\Vert
+\left\Vert \widetilde{\xi}_0\right\Vert )}\left(  1+C^0_K C^1_{\delta}T \left\Vert
\xi_0\right\Vert e^{C^1_{\delta}T\left\Vert \xi_0\right\Vert }\right) \right)
\end{align*}
Using $\xi_{0}$ and $\widetilde{\xi_{0}}$ we get that
\begin{align*}
\|\xi^\delta_{t}-\widetilde{\xi^\delta_{t}}\|&\leq \|\xi_{0}-\widetilde{\xi_{0}}\|
\left(TC_{\delta}^2 R +2\right)e^{2C^1_{\delta}T R } 
+\left(R^{2} \int_{0}^{T}\Vert\xi_s^\delta-\widetilde{\xi_s^\delta}\Vert ds\right) \times\\
&\left( C^0_{\delta}C^1_{\delta} + C^2_{\delta}
\left(  1+C^0_K C^1_{\delta}TR e^{C^1_{\delta}TR  }\right)\right)e^{2T C^1_{\delta} R }
\end{align*}
Set 
\begin{equation*}
C^{\ast}:=R^{2}\left( C^0_{\delta}C^1_{\delta} + C^2_{\delta}
\left(  1+C^0_K C^1_{\delta}TR e^{C^1_{\delta}TR  }\right)\right)e^{2T C^1_{\delta} R }
\end{equation*}
and 
\begin{equation*}
C_{\ast}:=\left(TC_{\delta}^2 R +2\right)e^{2C^1_{\delta}T R } 
\end{equation*}
Hence, using Gronwall lemma we deduce that
\begin{equation*}\label{Q}
\sup_{t\in\left[  0,T\right]  }\|\xi^\delta_{t}-\widetilde{\xi^\delta_{t}}\| \leq C_{\ast} \|\xi_{0}-\widetilde{\xi_{0}}\| e^{TC^{\ast}  } 
\end{equation*}
and this completes the proof by setting  $C_{\delta,R}:= C_{\ast} e^{TC^{\ast}}$.
\end{proof}

\subsection{Mean field result}\label{section final convergence}

\begin{theorem} \label{main_theorem}
	
Let $\xi_{0}^{N}$ be a sequence of currents and $\xi_{0}\in H^{s}$, $s>3/2$,
such that 
\begin{equation*}
\lim_{N\rightarrow\infty}\left\Vert \xi_{0}^{N}-\xi_{0}\right\Vert
=0.
\end{equation*}
Let $\xi_{t}$ be the solution in $H^{s}$ of equation \eqref{vorticity} on the interval
$\left[  0,T_{0}\right]  $ with initial condition $\xi_{0}$. For every
$\delta>0$ and positive integer $N$, let $\xi_{t}^{N,\delta}$ satisfying  \eqref{xi-N0} and \eqref{interacting curves} 
 on the interval $\left[  0,T\right]  $ with initial condition
$\xi_{0}^{N}$. Let $R>0$ be such that $\left\vert \xi_{0}\right\vert_{\mathcal{M}} \leq R$,
$\left\vert \xi_{0}^{N}\right\vert_{\mathcal{M}} \leq R$, and let $C_{\delta,R}>0$ be the
corresponding constant of Lemma \ref{lemma inequality}. Let $\delta
_{N}\rightarrow0$ be a sequence. If
\begin{equation}\label{initial condition convergence}
\lim_{N\rightarrow\infty}C_{\delta_{N},R}\left\Vert \xi_{0}^{N}-\xi
_{0}\right\Vert =0
\end{equation}
then%
\[
\lim_{N\rightarrow\infty}\sup_{t\in\left[  0,T\right]  }\left\Vert \xi
_{t}^{N,\delta_{N}}-\xi_{t}\right\Vert =0.
\]

\end{theorem}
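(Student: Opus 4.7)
The strategy is the classical triangle inequality of Marchioro--Pulvirenti style, tuning $\delta=\delta_N$ so that the blow-up of the $\delta$-stability constants is beaten by the rate of convergence of the initial data. Introduce the auxiliary current $\widetilde{\xi}_t^{\delta_N}$, namely the global-in-time solution on $[0,T_0]$ of the mollified PDE \eqref{PDE for currents} with the smooth initial datum $\xi_0\in H^s$ (its globality is granted by Theorem \ref{global} once $\xi_0$ is compactly supported, which follows from $|\xi_0|_{\mathcal{M}}\le R$ together with $\xi_0\in H^s$). Decompose
$$
\|\xi_t^{N,\delta_N}-\xi_t\|\ \le\ \|\xi_t^{N,\delta_N}-\widetilde{\xi}_t^{\delta_N}\|\ +\ \|\widetilde{\xi}_t^{\delta_N}-\xi_t\|
$$
and bound the two pieces separately.

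\textbf{First piece (mean-field at fixed mollification).} Both $\xi^{N,\delta_N}$ and $\widetilde{\xi}^{\delta_N}$ are solutions of the same Lagrangian system \eqref{system eq 1}--\eqref{system eq 2} at mollification level $\delta_N$, differing only in their initial data $\xi_0^N$ and $\xi_0$, each satisfying $|\cdot|_{\mathcal{M}}\le R$. The proof of Lemma \ref{lemma inequality} uses only the push-forward formulation $\xi_t=\varphi^{t,K^{\delta}\ast\xi}_{\sharp}\xi_0$ and the $\mathcal{M}$-bound on the initial datum, hence applies regardless of whether that datum is a filament current or a smooth $H^s$ vorticity field. Thus
$$
\sup_{t\in[0,T]}\|\xi_t^{N,\delta_N}-\widetilde{\xi}_t^{\delta_N}\|\ \le\ C_{\delta_N,R}\,\|\xi_0^N-\xi_0\|,
$$
which tends to $0$ by the tuning hypothesis \eqref{initial condition convergence}.

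\textbf{Second piece (stability under de-regularization).} The currents $\widetilde{\xi}^{\delta_N}$ and $\xi$ share the initial datum $\xi_0\in H^s$ and are respectively $H^s$-solutions of \eqref{vorticity-delta} (by Theorem \ref{sobolev solutions euler}, valid on the same $T_0$ independent of $\delta_N$) and of the true vorticity equation \eqref{vorticity}. Theorem \ref{delta-cv} gives $\widetilde{\xi}^{\delta_N}\to\xi$ in $C([0,T_0];L^2)$. To convert this into convergence in the weak current norm $\|\cdot\|$, I observe that the velocities $v^{\delta_N}$ are bounded uniformly in $\delta_N$ in $L^\infty$ on $[0,T_0]$, because the Sobolev embedding $H^{s+1}\hookrightarrow L^\infty$, the uniform $H^s$ bound from Theorem \ref{sobolev solutions euler}, and the estimate \eqref{s plus one of v} together give $\|v^{\delta_N}\|_\infty\le C\|\xi^{\delta_N}\|_{H^s}\le C$. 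Consequently the supports of $\widetilde{\xi}_t^{\delta_N}$ and $\xi_t$ lie in a common ball $B_T\subset\mathbb{R}^3$ independent of $\delta_N$ and $t\in[0,T_0]$. For any test $\theta$ with $\|\theta\|_\infty+\mathrm{Lip}(\theta)\le 1$, Cauchy--Schwarz then yields
$$
|(\widetilde{\xi}_t^{\delta_N}-\xi_t)(\theta)|\ \le\ \|\theta\|_\infty\,\|\widetilde{\xi}_t^{\delta_N}-\xi_t\|_{L^1(B_T)}\ \le\ |B_T|^{1/2}\,\|\widetilde{\xi}_t^{\delta_N}-\xi_t\|_{L^2},
$$
so $\sup_t\|\widetilde{\xi}_t^{\delta_N}-\xi_t\|\to 0$. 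Combining the two pieces finishes the proof.

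\textbf{Main obstacle.} The delicate point is quantitative: the constant $C_{\delta_N,R}$ of Lemma \ref{lemma inequality} contains factors of order $\delta_N^{-O(1)}$ inside exponentials (through $C^m_{\delta_N}\approx\delta_N^{-(4+m)}$), so the first piece is controlled only because the assumption \eqref{initial condition convergence} forces $\|\xi_0^N-\xi_0\|$ to decay faster than this blow-up. A secondary subtlety is the uniform compact-support control used to pass from $L^2$ to $\|\cdot\|$ in the second piece, which rests on the crucial fact that the $H^s$ bound of Theorem \ref{sobolev solutions euler} (and the corresponding bound on $v^{\delta_N}$) is independent of $\delta$.
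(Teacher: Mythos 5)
Your proposal is structurally identical to the paper's proof: the same auxiliary solution $\xi_t^{\delta_N}$ of the mollified PDE started from $\xi_0$, the same triangle-inequality decomposition, Lemma \ref{lemma inequality} combined with hypothesis \eqref{initial condition convergence} for the first piece, and Theorem \ref{delta-cv} for the second. The paper's own proof is in fact terser than yours: it simply invokes Theorem \ref{delta-cv} without discussing how convergence in $C([0,T_0];L^2)$ yields convergence in the weak current norm $\Vert\cdot\Vert$, a point you try to fill in.

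One genuine flaw in that added detail: you assert that compact support of $\xi_0$ ``follows from $|\xi_0|_{\mathcal{M}}\le R$ together with $\xi_0\in H^s$''. It does not --- finite total mass plus Sobolev regularity is perfectly compatible with unbounded support (a Gaussian profile, say). Compact support of $\xi_0$ must be assumed, as it is in Theorem \ref{global}, rather than deduced. Once it is assumed, the rest of your argument for the second piece is sound: the uniform-in-$\delta$ bound on $\Vert v^{\delta_N}\Vert_\infty$ confines the transported supports to a fixed ball on $[0,T_0]$, and your Cauchy--Schwarz step then converts $L^2$ convergence into convergence in $\Vert\cdot\Vert$. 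Since the paper itself leaves this support issue implicit, the gap does not separate your route from theirs, but the specific deduction you offer is wrong as stated and should be replaced by an explicit hypothesis.
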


\begin{proof}
Denote by $\xi_{t}^{\delta}$ the solution on $\left[  0,T\right]  $ of
equation \eqref{PDE for currents} with initial condition $\xi_{0}$. We have%
\begin{align*}
\left\Vert \xi_{t}^{N,\delta_{N}}-\xi_{t}\right\Vert  & \leq\left\Vert \xi
_{t}^{N,\delta_{N}}-\xi_{t}^{\delta_{N}}\right\Vert +\left\Vert \xi
_{t}^{\delta_{N}}-\xi_{t}\right\Vert \\
& \leq C_{\delta_{N},R}\left\Vert \xi_{0}^{N}-\xi_{0}\right\Vert +\left\Vert
\xi_{t}^{\delta_{N}}-\xi_{t}\right\Vert .
\end{align*}
Recall that $\lim_{N\rightarrow\infty}\sup_{t\in\left[  0,T\right]
}\left\Vert \xi_{t}^{\delta_{N}}-\xi_{t}\right\Vert =0$, from Theorem \ref{delta-cv}.\\
Then $\lim_{N\rightarrow\infty}\sup_{t\in\left[  0,T\right]
}\left\Vert \xi_{t}^{N,\delta_{N}}-\xi_{t}\right\Vert =0$.
\end{proof}

\vspace{1cm}

\noindent\textbf{Acknowledgements}:  Hakima Bessaih's research is
partially supported by NSF grant DMS-1418838.

\end{document}